\title{Self-Reachable Chip Configurations on Trees}
\author{Benjamin Lyons, McCabe Olsen}
\address{Department of Mathematics\\
  Rose-Hulman Institute of Technology\\
  Terre Haute, IN 47803--3920}
\date{\today}
\thanks{The first author was a participant at the Rose-Hulman REU funded by NSF DMS \#1852132 and advised by the second author.
The second author was partially supported by NSF DMS \# 2418532.}
\subjclass[2020]{Primary: 05C57, 05C05}
\pgfplotsset{compat=newest}
\numberwithin{equation}{section}
\numberwithin{figure}{section}
\newtheorem{theorem}{Theorem}[section]
\newtheorem{lemma}[theorem]{Lemma}
\newtheorem{corollary}[theorem]{Corollary}
\newtheorem{proposition}[theorem]{Proposition}
\theoremstyle{definition}
\newtheorem{definition}[theorem]{Definition}
\theoremstyle{remark}
\newtheorem{remark}[theorem]{Remark}
\def\XXint#1#2#3{{\setbox0=\hbox{$#1{#2#3}{\int}$ }
\vcenter{\hbox{$#2#3$ }}\kern-.6\wd0}}
\newcommand{\Def}[1]{\textbf{#1}}
\newcommand{\Z}{\mathbb{Z}}
\begin{document}

\begin{abstract} 
In this paper, we explore the notion of a \emph{self-reachable} chip configuration on a simple graph, that is a chip configuration which can be re-obtained from itself after a (nonempty) sequence of vertex firings. 
In particular, we focus on the case of trees and provide a characterization for such configurations, as well as show that all self-reachable configurations with the same number of chips on a tree are reachable from one another. 
We conclude with a recursive enumeration formula for the number of self-reachable configurations.   
\end{abstract}

\maketitle

\section{Introduction}

Let $G$ be a graph.
The chip firing game on $G$ is a rather simple way to explain a discrete dynamical system on the graph: To each vertex, associate a nonnegative integer (often viewed as a commodity such as poker chips). Then choosing a vertex which has at least as many chips as its degree, we ``fire" the vertex. That is, we transfer one chip to each of its neighbors.
We then continue this process as long as we want or are able to. 
The notion of chip firing developed both as a combinatorial game (see, e.g. \cite{ALS89,BLS91,Spencer}) and via the abelian sandpile model, which was first studied by Bak, Tang, and Wisenfeld (see \cite{BakTangWisenfeld}).
The mathematics of chip firing is quite rich encompassing combinatorics, statistical physics, and algebraic geometry via Riemann-Roch theory (see \cite{Klivans} for a broad overview of chip firing). 

In this paper, we ask a rather naive question: given a graph $G$ and an initial configuration chips on its vertices, what must be true in order to return to this configuration after a nonzero number of vertex firings? 
We call such a configuration of chips a \emph{self-reachable configuration}. 
We particularly focus on analyzing these configurations in trees, where one can prove stronger results. 

The paper is structured as follows. 
In Section \ref{sec:background} we outline necessary background, definitions and notation. 
In Section \ref{sec:firingGen}, we state and prove general results of chip firing configurations.
Section \ref{Sec:self} focuses on the main results of the paper on self-reachable configurations, particularly those concerning trees.
Lastly, we conclude briefly in Section \ref{sec:combinatorial} with an enumerative result. 

%The study of chip firing on graphs at first developed alongside the abelian sandpile model, which was first studied by Bak, Tang, and Wisenfeld (see \cite{BakTangWisenfeld}). In the chip firing process, chips are moved between vertices on a graph under certain rules and the chip configurations that are reached are analyzed. 

%In this work, we study chip firing on trees, which are connected simple graphs with no cycles. Specifically, we do so by analyzing self-reachable chip firing configurations, which are configurations of chips that can reach themselves after a certain legal firing sequence is performed.

\section{Background and Definitions} \label{sec:background}

%We begin this section by introducing some notation.
We begin by setting notation which will be used throughout.

\begin{itemize}
	\item For $a, b \in \mathbb{Z}$, we define the sequences $\mathbb{N}_{a} = \{a, a + 1, a + 2, \ldots\}$, and $\mathbb{N}_{[a, b]} = \{a, a + 1, a + 2, \ldots, b\}$.
        \item Given two finite sequences $\Phi$ and $\Psi$, we let $\{\Phi, \Psi\}$ be the sequence obtained by appending the sequence $\Psi$ to the end of the sequence $\Phi$.
        \item For a vector $\bm{x} \in \mathbb{R}^n$, we clarify the dimension of $\bm{x}$ by writing $\bm{x}^{(n)}$.
        \item Given vectors $\bm{x} = (x_1, x_2, \ldots, x_n) \in \mathbb{R}^n$ and $\bm{y} = (y_1, y_2, \ldots, y_m) \in \mathbb{R}^m$, we let \[(\bm{x}, \bm{y}) = (x_1, x_2, \ldots, x_n, y_1, y_2, \ldots, y_m) \in \mathbb{R}^{n + m}.\]
        \item Unless otherwise specified, we will label the vertices of an $n$-vertex graph as $v_1, v_2, \ldots, v_n$. Throughout this paper, we will only work with graphs that have at least one vertex.
        \item We will let $\deg^{(G)}(v_a)$ refer to the degree of the vertex $v_a$ on the graph $G$.
        When the graph is clear from context, we will denote this as simply $\deg(v_a)$. 
\end{itemize}

We will now review some basic definitions from graph theory and chip firing. 
For additional background and detail beyond what is provided here, we refer the reader to the excellent book of Klivans \cite{Klivans}.
Let a simple graph $G$ on $n$ vertices.
Recall that the \Def{Laplacian matrix} of $G$, denoted $\Delta(G)$, is the $n \times n$ matrix given by

     \begin{align*}
         \Delta_{ij}(G) = 
         \begin{cases}
             -1 & i \neq j \text{ and } (v_i, v_j) \in E \\
             \deg(v_i) & i = j \\
             0 & \text{otherwise.}
         \end{cases}
     \end{align*}
A \Def{chip configuration} on $G$ is a vector $\bm{c}^{(n)}\in\Z_{\geq 0}^n$, where we interpret $c_i$ as the number of chips on vertex $v_i$.
If $G$ has $n$ vertices with chip configuration $\bm{c}^{(n)}$, the vector $\bm{d}^{(n)}$ that results from \Def{firing} the $i$th vertex $v_i$ is
	\begin{align*}
        \bm{d}^{(n)} = \bm{c}^{(n)} - \Delta(G)\bm{e}_i^{(n)}.
    \end{align*}
This has the effect of moving a chip from $v_i$ to each vertex neighboring $v_i$. 
We say that it is \Def{legal} to fire $v_i$ on $G$ starting from $\bm{c}^{(n)}$ if $\bm{d}^{(n)}$ is a chip configuration, which is to say the $i$th component of $\bm{d}^{(n)}$ remains nonnegative. 
Equivalently, it is legal to fire $v_i$ provided $\bm{e}_i^{(n)} \cdot \bm{c}^{(n)} \ge \deg(v_i)$.
A \Def{firing sequence} is a sequence of integers which represent a sequence of legal chip firings.
Throughout this paper, firing sequences will be assumed to be finite. 
Given a starting configuration $\bm{c}^{(n)}$ and a firing sequence $\Phi$, the resulting configuration is
	\begin{align}\label{eq2.1}
        \bm{F}_\Phi^{(G)}\left(\bm{c}^{(n)}\right) = \bm{c}^{(n)} - \Delta(G)\sum_{j \in \Phi}\bm{e}_j^{(n)}.
    \end{align}
We say that a firing sequence $\Phi = \{\phi_1, \phi_2, \ldots, \phi_m\}$ is \Def{legal} on $G$ starting from a chip configuration $\bm{c}^{(n)}$ if it consists entirely of legal vertex firings.
That is, it is legal to fire $v_{\phi_1}$ on $G$ starting from $\bm{c}^{(n)}$, it is legal to fire $v_{\phi_2}$ on $G$ starting from $\bm{F}_{\{\phi_1\}}^{(G)}\left(\bm{c}^{(n)}\right)$, it is legal to fire $v_{\phi_3}$ on $G$ starting from $\bm{F}_{\{\phi_1, \phi_2\}}^{(G)}\left(\bm{c}^{(n)}\right)$, and so on.
 Let $R^{(G)}\left(\bm{c}^{(n)}\right)$ to be the set of chip configurations on $G$ that can be reached via nonempty legal firing sequences on $G$ starting from $\bm{c}^{(n)}$.
 We can now introduce a critical definition for this work.
 
 \begin{definition}\label{selfreachableconfig}
    Let $G$ be a connected simple $n$-vertex graph. 
    The chip configuration $\bm{s}^{(n)}$ is called \Def{self-reachable} on $G$ if $\bm{s}^{(n)} \in R^{(G)}\left(\bm{s}^{(n)}\right)$. 
    Let $S_\ell^{(G)}$ denote set of all self-reachable configurations on $G$ with exactly $\ell$ chips.
\end{definition}

\section{Firing Sequences}\label{sec:firingGen}

In this section, we detail
%present and prove 
several important background results pertaining to chip firing.
Many of these results are relatively straightforward and implicit from prior work on chip firing. 
Our purpose in stating and proving these results here is to create all necessary tools and techniques for general firing sequences needed to prove the main results in Section \ref{Sec:self}.

 We first recall a well-known result from chip firing theory, that chip firing is an abelian process.
 The proof of this result is well-known and follows immediately from the definitions.

\begin{lemma}\label{abelianprocess}
    Let $G$ be a simple $n$-vertex graph with chip configuration $\bm{c}^{(n)}$. Let $\Phi$ and $\Psi$ be legal firing sequences on $G$ starting from $\bm{c}^{(n)}$ with the property that $\Phi$ and $\Psi$ contain each natural number between 1 and $n$ the same number of times (i. e. they fire each vertex of $G$ the same number of times). Then $\bm{F}_\Phi^{(G)}\left(\bm{c}^{(n)}\right) = \bm{F}_\Psi^{(G)}\left(\bm{c}^{(n)}\right)$.
\end{lemma}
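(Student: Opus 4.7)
The plan is to prove this as an immediate consequence of equation (2.1), which expresses the result of firing a sequence $\Phi$ purely in terms of the vector $\sum_{j\in\Phi}\bm{e}_j^{(n)}$, a vector whose $i$th coordinate simply records how many times vertex $v_i$ is fired along $\Phi$. Because this coordinate vector is invariant under permutations of $\Phi$ and in fact depends only on the multiset of entries of $\Phi$, the identity will fall out with essentially no work once the hypothesis is translated into the statement $\sum_{j\in\Phi}\bm{e}_j^{(n)} = \sum_{j\in\Psi}\bm{e}_j^{(n)}$.

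Concretely, I would first observe that by the hypothesis, for every $i \in \mathbb{N}_{[1,n]}$ the number of $j \in \Phi$ with $j = i$ equals the number of $j \in \Psi$ with $j = i$. Writing these common multiplicities as $m_1, m_2, \ldots, m_n$, both of the standard-basis sums collapse to $\sum_{i=1}^n m_i \bm{e}_i^{(n)}$. I would then apply (2.1) separately to $\Phi$ and $\Psi$ to obtain
\begin{align*}
    \bm{F}_\Phi^{(G)}\!\left(\bm{c}^{(n)}\right)
    &= \bm{c}^{(n)} - \Delta(G)\sum_{j\in\Phi}\bm{e}_j^{(n)}
    = \bm{c}^{(n)} - \Delta(G)\sum_{i=1}^n m_i \bm{e}_i^{(n)} \\
    &= \bm{c}^{(n)} - \Delta(G)\sum_{j\in\Psi}\bm{e}_j^{(n)}
    = \bm{F}_\Psi^{(G)}\!\left(\bm{c}^{(n)}\right),
\end{align*}
and the lemma follows.

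The only step requiring comment is the use of (2.1), whose derivation implicitly requires an induction on the length of the sequence; but this is already built into the formalism set up in Section \ref{sec:background}, and so nothing further is needed. I do not anticipate any genuine obstacle: the hypothesis that both $\Phi$ and $\Psi$ are \emph{legal} is used only to guarantee that both sides of the identity are well-defined chip configurations, not for the algebraic equality itself, which is a straightforward consequence of the linearity of (2.1) in the firing multiplicities.
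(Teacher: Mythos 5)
Your proof is correct and matches the paper's treatment: the paper gives no explicit argument, stating only that the lemma ``follows immediately from the definitions,'' and your computation via equation \eqref{eq2.1} is precisely that immediate argument, since the sum $\sum_{j\in\Phi}\bm{e}_j^{(n)}$ depends only on the firing multiplicities. Your closing remark that legality is needed only for well-definedness, not for the algebraic identity, is also accurate.
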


%%%%Proof unnecessary%%%%%%
%\begin{proof}
 %   By definition,
 %   \begin{align*}
 %       \bm{F}_\Phi^{(G)}\left(\bm{c}^{(n)}\right) &= \bm{c}^{(n)} - \Delta(G)\sum_{j \in \Phi}\bm{e}_j^{(n)}, \\
 %       \bm{F}_\Psi^{(G)}\left(\bm{c}^{(n)}\right) &= \bm{c}^{(n)} - \Delta(G)\sum_{j \in \Psi}\bm{e}_j^{(n)}.
 %   \end{align*}
 %   Given that $\Phi$ and $\Psi$ contain each natural number the same number of times, it follows that $\displaystyle  \sum_{j \in \Phi}\bm{e}_j^{(n)} = \sum_{j \in \Psi}\bm{e}_j^{(n)}$ and thus  
%$\bm{F}_\Phi^{(G)}\left(\bm{c}^{(n)}\right) = \bm{F}_\Psi^{(G)}\left(\bm{c}^{(n)}\right)$, as desired.

    %\begin{align*}
     %   \sum_{j \in \Phi}\bm{e}_j^{(n)} = \sum_{j \in \Psi}\bm{e}_j^{(n)}
    %\end{align*}

    %since $\Phi$ and $\Psi$ must contain each natural number between 1 and $n$ the same number of times. Thus, $\bm{F}_\Phi^{(G)}\left(\bm{c}^{(n)}\right) = \bm{F}_\Psi^{(G)}\left(\bm{c}^{(n)}\right)$, as desired.
%\end{proof}

The following proposition describes an additive property associates with legal firing sequences.
It will be useful in subsequent results. 

\begin{proposition}\label{AdditivityOfFiring}
    Let $G$ be a simple $n$-vertex graph. Then for any chip configuration $\bm{c}^{(n)}$ on $G$, any vector $\bm{x}^{(n)} \in \mathbb{Z}^n$ such that $\bm{c}^{(n)} + \bm{x}^{(n)}$ is still a valid chip configuration, and any firing sequence $\Phi$ on $G$,
    \begin{align*}
        \bm{F}_\Phi^{(G)}\left(\bm{c}^{(n)} + \bm{x}^{(n)}\right) = \bm{F}_\Phi^{(G)}\left(\bm{c}^{(n)}\right) + \bm{x}^{(n)}.
    \end{align*}
\end{proposition}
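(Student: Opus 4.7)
The plan is to observe that equation (2.1) realizes $\bm{F}_\Phi^{(G)}$ as an affine map in the initial configuration: the only summand depending on $\bm{c}^{(n)}$ is the term $\bm{c}^{(n)}$ itself, while the correction $-\Delta(G)\sum_{j\in\Phi}\bm{e}_j^{(n)}$ is determined entirely by the graph $G$ and the firing sequence $\Phi$. Consequently, the proposition should follow from a single application of (2.1) together with the distributivity of vector addition.

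Concretely, first I would apply (2.1) to rewrite the left-hand side as $(\bm{c}^{(n)} + \bm{x}^{(n)}) - \Delta(G)\sum_{j \in \Phi}\bm{e}_j^{(n)}$. Then I would regroup using commutativity and associativity of vector addition so as to isolate the summand $\bm{x}^{(n)}$ from the rest. Finally, I would invoke (2.1) in the reverse direction to identify the remaining expression $\bm{c}^{(n)} - \Delta(G)\sum_{j \in \Phi}\bm{e}_j^{(n)}$ with $\bm{F}_\Phi^{(G)}(\bm{c}^{(n)})$, which yields the stated equality.

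I do not anticipate any substantive obstacle here, as the argument is pure linearity. The only subtlety worth flagging is legality: by the paper's convention a firing sequence consists of legal firings, so strictly speaking one should note that the hypothesis that $\bm{c}^{(n)} + \bm{x}^{(n)}$ remain a valid chip configuration, together with legality of $\Phi$ starting from $\bm{c}^{(n)}$, is what makes both sides of the equation meaningful as outputs of the firing operator. Since (2.1) is a purely algebraic identity, however, it does not depend on legality, and once the notation is granted the proof reduces to a two-line manipulation that I expect to be entirely routine. The real value of the proposition lies downstream, as a convenient tool for the results in Section \ref{Sec:self}.
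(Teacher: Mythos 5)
Your proposal is correct and takes essentially the same approach as the paper: both proofs simply expand each side via equation \eqref{eq2.1} and conclude by commutativity of vector addition. The legality remark you flag is a fine observation but is not needed, since \eqref{eq2.1} is purely algebraic, exactly as you note.
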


\begin{proof}
    Following from equation \eqref{eq2.1}, we have
    \begin{align*}%\label{eq3.1}
        \bm{F}_\Phi^{(G)}\left(\bm{c}^{(n)} + \bm{x}^{(n)}\right) = \bm{c}^{(n)} + \bm{x}^{(n)} - \Delta(G)\sum_{j \in \Phi}\bm{e}_j^{(n)}
    \end{align*}
    and
    \begin{align*}%\label{eq3.2}
        \bm{F}_\Phi^{(G)}\left(\bm{c}^{(n)}\right) + \bm{x}^{(n)} = \bm{c}^{(n)} - \Delta(G)\sum_{j \in \Phi}\bm{e}_j^{(n)} + \bm{x}^{(n)}
    \end{align*}
    which gives the desired result. 
    %Since \eqref{eq3.1} matches \eqref{eq3.2} on the right-hand side, the desired claim follows.
\end{proof}

The result to follow concerns legality of firing sequences on different starting chip configurations. 

\begin{proposition}\label{LegalityProp}
    Let $G$ be a simple $n$-vertex graph with chip configuration $\bm{c}^{(n)}$, and let $\Phi$ be a legal firing sequence on $G$ starting from $\bm{c}^{(n)}$. Let $\bm{d}^{(n)}$ be a chip configuration on $G$ with the property that for each $i \in \mathbb{N}_{[1, n]}$ that appears in $\Phi$,
    \begin{align*}
        \left(\bm{d}^{(n)} - \bm{c}^{(n)}\right) \cdot \bm{e}_i^{(n)} \ge 0.
    \end{align*}
    Then $\Phi$ is a legal firing sequence on $G$ starting from $\bm{d}^{(n)}$.
\end{proposition}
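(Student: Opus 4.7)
The plan is to check legality of $\Phi = \{\phi_1, \phi_2, \ldots, \phi_m\}$ one firing at a time starting from $\bm{d}^{(n)}$. It suffices to show that for each $k \in \mathbb{N}_{[1, m]}$, the $\phi_k$ coordinate of $\bm{F}^{(G)}_{\{\phi_1, \ldots, \phi_{k-1}\}}(\bm{d}^{(n)})$ is at least $\deg(v_{\phi_k})$, since this is precisely the legality requirement for firing $v_{\phi_k}$ at the $k$-th step.

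The key maneuver is to apply Proposition \ref{AdditivityOfFiring} with the displacement vector $\bm{x}^{(n)} = \bm{d}^{(n)} - \bm{c}^{(n)}$; this choice is permitted because $\bm{c}^{(n)} + \bm{x}^{(n)} = \bm{d}^{(n)}$ is a valid chip configuration by assumption. Writing $\Psi_k = \{\phi_1, \ldots, \phi_{k-1}\}$, the proposition yields
\[
\bm{F}^{(G)}_{\Psi_k}\!\left(\bm{d}^{(n)}\right) = \bm{F}^{(G)}_{\Psi_k}\!\left(\bm{c}^{(n)}\right) + \left(\bm{d}^{(n)} - \bm{c}^{(n)}\right).
\]
Dotting both sides with $\bm{e}^{(n)}_{\phi_k}$ and combining (i) the legality of $\Phi$ on $\bm{c}^{(n)}$, which gives $\bm{F}^{(G)}_{\Psi_k}(\bm{c}^{(n)}) \cdot \bm{e}^{(n)}_{\phi_k} \geq \deg(v_{\phi_k})$, with (ii) the hypothesis $(\bm{d}^{(n)} - \bm{c}^{(n)}) \cdot \bm{e}^{(n)}_{\phi_k} \geq 0$, which applies because $\phi_k$ appears in $\Phi$, we obtain
\[
\bm{F}^{(G)}_{\Psi_k}\!\left(\bm{d}^{(n)}\right) \cdot \bm{e}^{(n)}_{\phi_k} \geq \deg(v_{\phi_k}),
\]
which is the desired inequality.

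Because the operator $\bm{F}^{(G)}_{\Psi_k}$ is defined by the purely algebraic formula \eqref{eq2.1}, its evaluation does not depend on whether the prior firings were legal, so no induction on $k$ is strictly necessary---the coordinate estimate above holds uniformly for every $k \in \mathbb{N}_{[1,m]}$. I do not anticipate any real obstacle: once Proposition \ref{AdditivityOfFiring} is invoked, the remaining content is a single one-line coordinate comparison replicated at each firing step.
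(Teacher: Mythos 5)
Your proof is correct and follows essentially the same route as the paper's: apply Proposition \ref{AdditivityOfFiring} with $\bm{x}^{(n)} = \bm{d}^{(n)} - \bm{c}^{(n)}$, dot the resulting identity with $\bm{e}^{(n)}_{\phi_k}$, and combine the legality of $\Phi$ from $\bm{c}^{(n)}$ with the coordinatewise hypothesis on $\bm{d}^{(n)} - \bm{c}^{(n)}$. If anything, your write-up is slightly more precise than the paper's, which loosely calls the first term ``nonnegative'' where the argument actually needs it to be at least $\deg(v_{\phi_k})$ --- exactly the bound you state.
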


\begin{proof}
    We let $\Phi = \{\phi_1, \phi_2, \ldots, \phi_k\}$, and we let $j \in \mathbb{N}_{[1, k]}$ be arbitrary. It suffices to prove that it is legal to fire $v_{\phi_j}$ starting from $\bm{F}_{\{\phi_1, \phi_2, \ldots, \phi_{j - 1}\}}^{(G)}\left(\bm{d}^{(n)}\right)$. By Proposition \ref{AdditivityOfFiring}, we have that
    \begin{align*}
        \bm{F}_{\{\phi_1, \phi_2, \ldots, \phi_{j - 1}\}}^{(G)}\left(\bm{d}^{(n)}\right) = \bm{F}_{\{\phi_1, \phi_2, \ldots, \phi_{j - 1}\}}^{(G)}\left(\bm{c}^{(n)}\right) + \bm{d}^{(n)} - \bm{c}_1^{(n)}.
    \end{align*}
    Taking the dot product with $\bm{e}_{\phi_j}^{(n)}$, we obtain
    \begin{align}\label{3.3}
        \bm{F}_{\{\phi_1, \phi_2, \ldots, \phi_{j - 1}\}}^{(G)}\left(\bm{d}^{(n)}\right) \cdot \bm{e}_{\phi_j}^{(n)} = \left[\bm{F}_{\{\phi_1, \phi_2, \ldots, \phi_{j - 1}\}}^{(G)}\left(\bm{c}^{(n)}\right) \cdot \bm{e}_{\phi_j}^{(n)}\right] + \left[\left(\bm{d}^{(n)} - \bm{c}^{(n)}\right) \cdot \bm{e}_{\phi_j}^{(n)}\right].
    \end{align}
    Note that the first term in square brackets in \eqref{3.3} is nonnegative by the assumption that $\Phi$ is a legal firing sequence on $G$ starting from $\bm{c}^{(n)}$. The second term in square brackets in \eqref{3.3} is also nonnegative by our assumption that $\left(\bm{d}^{(n)} - \bm{c}^{(n)}\right) \cdot \bm{e}_i^{(n)} \ge 0$ for each $i \in \mathbb{N}_{[1, n]}$ that appears in $\Phi$. 
    Thus, the left-hand side of \eqref{3.3} which yeilds the desired result.
    %Thus, the left-hand side of \eqref{3.3} is nonnegative, which proves that it is legal to fire $v_{\phi_j}$ starting from $\bm{F}_{\{\phi_1, \phi_2, \ldots, \phi_{j - 1}\}}^{(G)}\left(\bm{c}_2^{(n)}\right)$, as desired.
\end{proof}

The following lemma extends the abelian notion given by Lemma \ref{abelianprocess} to disjoint firing sequences. 

\begin{lemma}\label{AbelianProcessII}
    Let $G$ be a simple $n$-vertex graph with chip configuration $\bm{c}^{(n)}$. Let $\Phi$ and $\Psi$ be legal firing sequences on $G$ starting from $\bm{c}^{(n)}$ with the property that $\Phi \cap \Psi = \varnothing$ when $\Phi$ and $\Psi$ are viewed as multisets. Then both $\{\Phi, \Psi\}$ and $\{\Psi, \Phi\}$ are legal firing sequences on $G$ starting from $\bm{c}^{(n)}$.
\end{lemma}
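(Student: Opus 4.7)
The plan is to reduce this directly to Proposition \ref{LegalityProp} by observing that when we fire the sequence $\Phi$ from $\bm{c}^{(n)}$, each vertex $v_i$ that appears in $\Psi$ can only \emph{gain} chips (never lose them), because $v_i$ itself is never fired during $\Phi$.

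More precisely, I would first handle $\{\Phi,\Psi\}$. By hypothesis $\Phi$ is legal on $\bm{c}^{(n)}$, so the only thing to verify is that $\Psi$ is legal starting from $\bm{F}_\Phi^{(G)}(\bm{c}^{(n)})$. Using equation \eqref{eq2.1}, I would write
\[
\bm{F}_\Phi^{(G)}\!\left(\bm{c}^{(n)}\right) - \bm{c}^{(n)} \;=\; -\,\Delta(G)\sum_{j\in\Phi}\bm{e}_j^{(n)},
\]
and then take the dot product with $\bm{e}_i^{(n)}$ for any $i$ appearing in $\Psi$. Because $\Phi\cap\Psi=\varnothing$ as multisets, the index $i$ never appears in $\Phi$, so the diagonal contribution $\deg(v_i)$ of $\Delta(G)$ never enters the sum, leaving only off-diagonal terms of the form $-\Delta_{ij}(G)\in\{0,1\}$. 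Hence the $i$th entry of $\bm{F}_\Phi^{(G)}(\bm{c}^{(n)})-\bm{c}^{(n)}$ equals the number of neighbors of $v_i$ that appear in $\Phi$ (counted with multiplicity), which is nonnegative.

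With that inequality in hand for every $i$ appearing in $\Psi$, Proposition \ref{LegalityProp} (with the roles $\bm{c}\leftrightarrow\bm{c}^{(n)}$ and $\bm{d}\leftrightarrow \bm{F}_\Phi^{(G)}(\bm{c}^{(n)})$) immediately yields that $\Psi$ is legal starting from $\bm{F}_\Phi^{(G)}(\bm{c}^{(n)})$. Concatenating with the already-legal $\Phi$ gives the legality of $\{\Phi,\Psi\}$. The legality of $\{\Psi,\Phi\}$ follows by swapping the roles of $\Phi$ and $\Psi$, since the disjointness hypothesis is symmetric.

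The argument is essentially bookkeeping, and I don't anticipate a real obstacle; the only subtle point is to be careful that the disjointness is as \emph{multisets}, which is exactly what ensures the diagonal term $\deg(v_i)$ does not appear when computing $(\bm{F}_\Phi^{(G)}(\bm{c}^{(n)})-\bm{c}^{(n)})\cdot \bm{e}_i^{(n)}$ for $i\in\Psi$. Everything else is an application of Propositions \ref{AdditivityOfFiring} and \ref{LegalityProp}.
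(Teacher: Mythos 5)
Your proof is correct and takes essentially the same route as the paper's: both establish that $\left(\bm{F}_\Phi^{(G)}\left(\bm{c}^{(n)}\right) - \bm{c}^{(n)}\right) \cdot \bm{e}_i^{(n)} \ge 0$ for every $i$ appearing in $\Psi$ (using that such $i$ never occur in $\Phi$, by multiset disjointness), then invoke Proposition \ref{LegalityProp} to get legality of $\Psi$ from $\bm{F}_\Phi^{(G)}\left(\bm{c}^{(n)}\right)$, and finish by symmetry for $\{\Psi,\Phi\}$. The only difference is presentational: the paper justifies the key inequality verbally (vertices not fired in $\Phi$ can only gain chips), while you derive it explicitly from the Laplacian's sign pattern, which is the same observation made slightly more formal.
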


\begin{proof}
    We will prove that $\{\Phi, \Psi\}$ is a legal firing sequence on $G$ starting from $\bm{c}^{(n)}$; the argument for $\{\Psi, \Phi\}$ is identical by symmetry. Note that because none of the elements of $\Psi$ appear in $\Phi$ and vertices can only lose chips by firing, the vertices that fire in $\Psi$ do not lose any chips when the vertices in $\Phi$ are firing. Therefore
    \begin{align*}
        \left(\bm{F}_\Phi^{(G)}\left(\bm{c}^{(n)}\right) - \bm{c}^{(n)}\right) \cdot \bm{e}_i^{(n)} \ge 0
    \end{align*}
    for all $i \in \mathbb{N}_{[1, n]}$ that appear in $\Psi$. So by Proposition \ref{LegalityProp}, $\Psi$ is a legal firing sequence on $G$ starting from $\bm{F}_\Phi^{(G)}\left(\bm{c}^{(n)}\right)$. Since we are given that $\Phi$ is a legal firing sequence on $G$ starting from $\bm{c}^{(n)}$, we conclude that $\{\Phi, \Psi\}$ is a legal firing sequence on $G$ starting from $\bm{c}^{(n)}$, as desired.
\end{proof}

%\begin{proposition}\label{prop2.2}
 %   Let $G$ be a simple $n$-vertex graph and let $\bm{1}^{(n)}$ be the $n$-dimensional vector whose components are all 1. Then

%    \begin{align*}
 %       \Delta(G)\bm{1}^{(n)} = \bm{0}^{(n)}.
 %   \end{align*}
%\end{proposition}

%\begin{proof}
%    We will show that the first component of $\Delta(G)\bm{1}^{(n)}$ is 0; the argument for the other components is identical. The first component of $\Delta(G)\bm{1}^{(n)}$ is the sum of the entries in the first row of $\Delta(G)$. Let $d = \deg^{(G)}(v_1)$. The first row of $\Delta(G)$ will contain $d$ copies of $-1$, one for each vertex neighboring $v_1$. The only other nonzero entry in the first row of $\Delta(G)$ is the $d$ in the first column. Thus, the sum of the entries in the first row of $\Delta(G)$ is 0, meaning the first component of $\Delta(G)\bm{1}^{(n)}$ is 0.
%\end{proof}

We next prove that any legal firing sequence on a graph that fires each vertex exactly once yields the starting chip configuration.

\begin{lemma}\label{firingsequenceeachvertexonce}
    Let $G$ be a simple $n$-vertex graph with chip configuration $\bm{c}^{(n)}$, and let $\Phi$ be a legal firing sequence on $G$ starting from $\bm{c}^{(n)}$ that contains each natural number between 1 and $n$ exactly once. Then $\bm{F}_\Phi^{(G)}\left(\bm{c}^{(n)}\right) = \bm{c}^{(n)}$.
\end{lemma}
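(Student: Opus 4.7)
The plan is to compute $\bm{F}_\Phi^{(G)}\left(\bm{c}^{(n)}\right)$ directly using formula \eqref{eq2.1} and exploit the fact that the all-ones vector lies in the kernel of the Laplacian $\Delta(G)$. Since $\Phi$ contains each integer in $\mathbb{N}_{[1,n]}$ exactly once, the sum $\sum_{j\in\Phi}\bm{e}_j^{(n)}$ simplifies to the all-ones vector $\bm{1}^{(n)}$, regardless of the order in which the firings occur in $\Phi$.

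So the first step is to apply \eqref{eq2.1} to obtain
\[
\bm{F}_\Phi^{(G)}\left(\bm{c}^{(n)}\right) \;=\; \bm{c}^{(n)} - \Delta(G)\sum_{j\in\Phi}\bm{e}_j^{(n)} \;=\; \bm{c}^{(n)} - \Delta(G)\,\bm{1}^{(n)}.
\]
The second step is to verify that $\Delta(G)\,\bm{1}^{(n)} = \bm{0}^{(n)}$. This is immediate from the definition of $\Delta(G)$: the $i$-th row has a single diagonal entry equal to $\deg(v_i)$ and exactly $\deg(v_i)$ off-diagonal entries equal to $-1$ (one for each neighbor), so the row sum is zero. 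Thus $\bm{F}_\Phi^{(G)}\left(\bm{c}^{(n)}\right) = \bm{c}^{(n)}$.

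There is really no serious obstacle here; the content is entirely that the Laplacian annihilates the constant vector, which is the classical fact underlying the observation that firing every vertex exactly once returns the system to its starting state. The only thing worth remarking on is that the hypothesis of legality of $\Phi$ plays no role in the final algebra — it is used implicitly only to ensure that the intermediate configurations are valid chip configurations so that $\bm{F}_\Phi^{(G)}\left(\bm{c}^{(n)}\right)$ is well-defined as the outcome of a legal sequence; the endpoint equality is a purely linear-algebraic statement.
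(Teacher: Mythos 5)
Your proof is correct and follows exactly the paper's own argument: apply \eqref{eq2.1}, observe that $\sum_{j\in\Phi}\bm{e}_j^{(n)} = \bm{1}^{(n)}$, and use the fact that $\bm{1}^{(n)}$ lies in the kernel of $\Delta(G)$ (which you verify explicitly via row sums, where the paper simply cites it). Your closing remark that legality is irrelevant to the final algebra is accurate and a nice observation, but the substance of the two proofs is identical.
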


\begin{proof}
    By definition,
    \begin{align*}
        \bm{F}_\Phi^{(G)}\left(\bm{c}^{(n)}\right) = \bm{c}^{(n)} - \Delta(G)\sum_{j \in \Phi}\bm{e}_j^{(n)}.
    \end{align*}
    Now note that since $\Phi$ fires each vertex of $G$ exactly once,
    \begin{align*}
        \sum_{j \in \Phi}\bm{e}_j^{(n)} = \bm{1}^{(n)}.
    \end{align*}
    Since $\bm{1}^{(n)}$ is in the kernel of $\Delta(G)$, 
    \begin{align*}
        \bm{F}_\Phi^{(G)}\left(\bm{c}^{(n)}\right) = \bm{c}^{(n)} - \Delta(G)\bm{1}^{(n)} = \bm{c}^{(n)},
    \end{align*}
    which proves the desired result.
\end{proof}

Our next result is a lemma that establishes a method of developing firing sequences of shorter lengths.

\begin{lemma}\label{firingsequencealgorithm}
    Let $G$ be a simple $n$-vertex graph with chip configuration $\bm{c}^{(n)}$, and let $\Phi$ be a legal firing sequence on $G$ starting from $\bm{c}^{(n)}$ that contains each natural number between 1 and $n$ at least once. Let $\Psi$ be the sequence obtained by removing the first occurrence of each $k \in \mathbb{N}_{[1, n]}$ from $\Phi$. Then:
    
    \begin{enumerate}
        \item $\Psi$ is a legal firing sequence on $G$ starting from $\bm{c}^{(n)}$, and
        \item $\bm{F}_\Phi^{(G)}\left(\bm{c}^{(n)}\right) = \bm{F}_\Psi^{(G)}\left(\bm{c}^{(n)}\right)$.
    \end{enumerate}
\end{lemma}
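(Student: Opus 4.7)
The plan is to address the two parts separately: (2) is immediate, while (1) is the substantive claim, requiring a careful comparison of the chip counts at matched moments in $\Phi$ and $\Psi$.

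For (2), observe that $\Psi$ drops exactly one occurrence of each vertex from $\Phi$, so the firing count vectors satisfy $\sum_{j \in \Phi} \bm{e}_j^{(n)} - \sum_{j \in \Psi} \bm{e}_j^{(n)} = \bm{1}^{(n)} \in \ker \Delta(G)$. Applying \eqref{eq2.1} yields the desired equality directly, in the spirit of Lemma \ref{firingsequenceeachvertexonce}.

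For (1), the idea is to compare the configuration reached by each prefix $\Psi^{(k-1)} = (\phi_{j_1}, \ldots, \phi_{j_{k-1}})$ of $\Psi$ with the configuration reached by the prefix $\Phi^{(j_k-1)} = (\phi_1, \ldots, \phi_{j_k-1})$ of $\Phi$, where $j_1 < \cdots < j_{m-n}$ are the positions in $\Phi$ of the non-first occurrences. Let $A_k \subseteq \mathbb{N}_{[1,n]}$ denote the set of indices $i$ such that $v_i$'s first occurrence in $\Phi$ appears at a position strictly less than $j_k$. Then every index in $A_k$ is fired exactly one fewer time in $\Psi^{(k-1)}$ than in $\Phi^{(j_k-1)}$, while indices outside $A_k$ are not fired at all in either prefix. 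By Proposition \ref{AdditivityOfFiring} (or directly from \eqref{eq2.1}), the two configurations therefore differ by $\Delta(G)\sum_{i \in A_k} \bm{e}_i^{(n)}$. Since $j_k$ is a non-first occurrence of $v_{\phi_{j_k}}$, we have $\phi_{j_k} \in A_k$, and extracting the $\phi_{j_k}$-component of this correction yields $\deg(v_{\phi_{j_k}}) - |\{i \in A_k : v_i \sim v_{\phi_{j_k}}\}|$, which is nonnegative because the number of neighbors of $v_{\phi_{j_k}}$ lying in $A_k$ is at most $\deg(v_{\phi_{j_k}})$. Combined with the legality of $\Phi$, which guarantees that the $\phi_{j_k}$-component of $\bm{F}_{\Phi^{(j_k-1)}}^{(G)}(\bm{c}^{(n)})$ is at least $\deg(v_{\phi_{j_k}})$, we conclude that firing $v_{\phi_{j_k}}$ at step $k$ of $\Psi$ is legal.

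The main obstacle will be the index bookkeeping needed to correctly identify $A_k$ and to align prefixes of $\Psi$ with prefixes of $\Phi$; once this setup is in place, the required inequality collapses to the one-line degree bound above.
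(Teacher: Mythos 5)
Your proposal is correct and is essentially the paper's own argument: both align each firing $\phi_{j_k}$ of $\Psi$ with the corresponding prefix of $\Phi$, observe that the two prefix configurations differ by exactly one extra firing of each vertex in your set $A_k$ (the paper's relabeled set $\mathbb{N}_{[1,j]}$), and use the bound $\deg\left(v_{\phi_{j_k}}\right) - \left|\left\{i \in A_k : v_i \sim v_{\phi_{j_k}}\right\}\right| \ge 0$ to transfer legality at that step from $\Phi$ to $\Psi$. The differences are purely organizational: the paper runs this as a proof by contradiction at the first illegal firing of $\Psi$, using a relabeling and Lemma \ref{abelianprocess}, whereas you verify each step directly from \eqref{eq2.1}; likewise your part (2) inlines the kernel computation that the paper obtains by combining Lemma \ref{abelianprocess} with Lemma \ref{firingsequenceeachvertexonce}.
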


\begin{proof}
To prove the first claim, suppose towards contradiction that $\Psi$ is an illegal firing sequence on $G$ starting from $\bm{c}^{(n)}$. This means that at least one of the firing moves in $\Psi$ is illegal. Without loss of generality, we can relabel the vertices of $G$ such that for some $m \in \mathbb{N}$, the $m$th firing of $v_1$ in $\Psi$ is illegal. This means that 1 appears at least $m$ times in $\Psi$ and at least $m + 1$ times in $\Phi$. We can then relabel the remaining vertices of $G$ such that for some $j \in \mathbb{N}_{[1, n]}$, $1, 2, \ldots, j$ appear at least once in $\Phi$ before the $(m + 1)$th occurrence of 1, while $j + 1, j + 2, \ldots, n$ do not. We now let $\Phi^*$ be the sequence of elements of $\Phi$ up to but excluding the $(m + 1)$th occurrence of 1, and we let $\Psi^*$ be the sequence of elements of $\Psi$ up to but excluding the $m$th occurrence of 1. So for each $k \in \mathbb{N}_{[1, j]}$, $\Phi^*$ contains exactly one more copy of $k$ than $\Psi^*$. This means that the sequences $\Phi^*$ and $\left\{\Psi^*, \mathbb{N}_{[1, j]}\right\}$ contain each natural number between 1 and $n$ the same number of times. So Lemma \ref{abelianprocess} implies that
        \begin{align*}
            \bm{F}_{\Phi^*}^{(G)}\left(\bm{c}^{(n)}\right) = \bm{F}_{\left\{\Psi^*, \mathbb{N}_{[1, j]}\right\}}^{(G)}\left(\bm{c}^{(n)}\right)
        \end{align*}
        We now let $d = \deg(v_1)$.
        %$d = \deg^{(G)}(v_1)$.  
        Since the next firing move in $\Psi$ after the firing moves in $\Psi^*$ is the illegal firing of $v_1$, there must be fewer than $d$ chips on $v_1$ in the chip configuration $\bm{F}_{\Psi^*}^{(G)}\left(\bm{c}^{(n)}\right)$. This is still true for $\bm{F}_{\left\{\Psi^*, \mathbb{N}_{[1, j]}\right\}}^{(G)}\left(\bm{c}^{(n)}\right)$ since $v_1$ fires an additional time and each neighbor of $v_1$ fires at most one additional time. Therefore, it is illegal to fire $v_1$ on $G$ starting from $\bm{F}_{\left\{\Psi^*, \mathbb{N}_{[1, j]}\right\}}^{(G)}\left(\bm{c}^{(n)}\right) = \bm{F}_{\Phi^*}^{(G)}\left(\bm{c}^{(n)}\right)$. This contradicts the fact that $\Phi$ is a legal firing sequence since 1 is the first element of $\Phi$ that is not part of $\Phi^*$.

To see the second claim, note that 
        \begin{align*}
            \bm{F}_{\Phi}^{(G)}\left(\bm{c}^{(n)}\right) = \bm{F}_{\left\{\Psi, \mathbb{N}_{[1, n]}\right\}}^{(G)}\left(\bm{c}^{(n)}\right) = \bm{F}_{\mathbb{N}_{[1, n]}}^{(G)}\left(\bm{F}_{\Psi}^{(G)}\left(\bm{c}^{(n)}\right)\right)
        \end{align*}
     by Lemma \ref{abelianprocess} as $\Phi$ and $\left\{\Psi, \mathbb{N}_{[1, n]}\right\}$ contain each natural number the same number of times. By Lemma \ref{firingsequenceeachvertexonce},
        \begin{align*}
            \bm{F}_{\mathbb{N}_{[1, n]}}^{(G)}\left(\bm{F}_{\Psi}^{(G)}\left(\bm{c}^{(n)}\right)\right) = \bm{F}_{\Psi}^{(G)}\left(\bm{c}^{(n)}\right).
        \end{align*}
        Thus,
        \begin{align*}
            \bm{F}_{\Phi}^{(G)}\left(\bm{c}^{(n)}\right) = \bm{F}_{\Psi}^{(G)}\left(\bm{c}^{(n)}\right),
        \end{align*}
        as desired.
\end{proof}

Next, we present a lemma that concerns firing a vertex of a graph that is attached to a vertex of degree 1.

\begin{lemma}\label{treeinductionbasis}
    Let $G$ be a simple $n$-vertex graph ($n \in \mathbb{N}_2$) where $v_n$ is a vertex of degree 1 connected to $v_{n - 1}$. Suppose $G$ has initial chip configuration $\left(\bm{c}^{(n - 2)}, c_{n - 1}, c_n\right)$, where $c_n \ge 1$. If it is legal to fire $v_{n - 1}$ on $G \setminus v_n$ starting from $\left(\bm{c}^{(n - 2)}, c_{n - 1}\right)$, then:
    \begin{enumerate}
        \item The firing sequence $\{n, n - 1\}$ is legal on $G$ starting from $\left(\bm{c}^{(n - 2)}, c_{n - 1}, c_n\right)$, and
        \item
        \begin{align}\label{eq3.3}
            \bm{F}_{\{n, n - 1\}}^{(G)}\left(\left(\bm{c}^{(n - 2)}, c_{n - 1}, c_n\right)\right) = \left(\bm{F}_{\{n - 1\}}^{(G \setminus v_n)}\left(\left(\bm{c}^{(n - 2)}, c_{n - 1}\right)\right), c_n\right)
        \end{align}
    \end{enumerate}
\end{lemma}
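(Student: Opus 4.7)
The plan is to prove both claims by direct computation, leveraging the fact that $v_n$ has degree $1$ in $G$ and is adjacent only to $v_{n-1}$, so that $\deg^{(G)}(v_{n-1}) = \deg^{(G \setminus v_n)}(v_{n-1}) + 1$.

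For claim (1), I would first observe that since $c_n \geq 1 = \deg^{(G)}(v_n)$, firing $v_n$ on $G$ starting from $(\bm{c}^{(n-2)}, c_{n-1}, c_n)$ is legal and produces the configuration $(\bm{c}^{(n-2)}, c_{n-1} + 1, c_n - 1)$, since $v_{n-1}$ is the unique neighbor of $v_n$. Now the assumption that firing $v_{n-1}$ on $G \setminus v_n$ is legal from $(\bm{c}^{(n-2)}, c_{n-1})$ means $c_{n-1} \geq \deg^{(G \setminus v_n)}(v_{n-1}) = \deg^{(G)}(v_{n-1}) - 1$, so $c_{n-1} + 1 \geq \deg^{(G)}(v_{n-1})$, which shows that firing $v_{n-1}$ on $G$ is legal from the intermediate configuration. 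This establishes legality of $\{n, n-1\}$.

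For claim (2), I would compute the left-hand side component by component. After executing the sequence $\{n, n-1\}$ on $G$: the $v_n$ coordinate incurs $-1$ from the firing of $v_n$ and $+1$ from the firing of $v_{n-1}$, for a net change of $0$, giving $c_n$ in the last slot; the $v_{n-1}$ coordinate changes by $+1 - \deg^{(G)}(v_{n-1})$; and each $v_i$ with $i \leq n-2$ changes by $+1$ if $v_i$ is adjacent to $v_{n-1}$ in $G$ and by $0$ otherwise. On the right-hand side, $\bm{F}_{\{n-1\}}^{(G \setminus v_n)}$ changes the $v_{n-1}$ coordinate by $-\deg^{(G \setminus v_n)}(v_{n-1}) = 1 - \deg^{(G)}(v_{n-1})$ and changes each $v_i$ with $i \leq n-2$ by $+1$ exactly when $v_i$ is adjacent to $v_{n-1}$ in $G \setminus v_n$, which (since $i \leq n-2$) is equivalent to adjacency in $G$. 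Appending $c_n$ as the final coordinate then matches the LHS exactly, establishing \eqref{eq3.3}.

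There is no real conceptual obstacle here; the proof is essentially an accounting exercise. The only point that requires a moment of care is recognizing that the firing of $v_{n-1}$ in $G$ returns precisely one chip to $v_n$, exactly undoing the $-1$ from firing $v_n$, which is what allows the last coordinate of the result to be $c_n$ and explains why the two-step operation on $G$ collapses to a one-step operation on $G \setminus v_n$ (with $c_n$ simply appended).
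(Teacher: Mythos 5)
Your proof is correct and takes essentially the same route as the paper: claim (1) via the observation that $\deg^{(G)}(v_{n-1}) = \deg^{(G\setminus v_n)}(v_{n-1}) + 1$ together with the legality hypothesis on $G \setminus v_n$, and claim (2) via component-wise accounting, with the key point in both being that the firing of $v_{n-1}$ returns to $v_n$ exactly the chip it lost. Your write-up is just slightly more explicit in tracking the net change of each coordinate, but it is the same argument.
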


\begin{proof}
    %We once again prove each of the above claims separately:
    %\begin{enumerate}
        %\item 
      For the first claim,  since $c_n \ge 1$, the firing of $v_n$ on $G$ starting from $\left(\bm{c}^{(n - 2)}, c_{n - 1}, c_n\right)$ is legal, so we only need to show that the firing of $v_{n - 1}$ on $G$ starting from $\left(\bm{c}^{(n - 2)}, c_{n - 1} + 1, c_n - 1\right)$ is legal. First note that
        \begin{align*}
            \deg^{(G)}(v_{n - 1}) = \deg^{(G \setminus v_n)}(v_{n - 1}) + 1
        \end{align*}
%%%
        since $v_{n - 1}$ and $v_n$ share an edge. Because it is legal to fire $v_{n - 1}$ on $G \setminus v_n$ starting from $\left(\bm{c}^{(n - 2)}, c_{n - 1}\right)$, we have that $c_{n - 1} \ge \deg^{(G \setminus v_n)}(v_{n - 1})$. Thus, 
%%%
        \begin{align*}
            c_{n - 1} + 1 \ge \deg^{(G \setminus v_n)}(v_{n - 1}) + 1 = \deg^{(G)}(v_{n - 1}).
        \end{align*}
%%%%
        Thus, it is legal to fire $v_{n - 1}$ on $G$ starting from $\left(\bm{c}^{(n - 2)}, c_{n - 1} + 1, c_n - 1\right)$, which proves (1).
        
        To show the second claim, note that first $n - 2$ components of the chip configurations in \eqref{eq3.3} are the same because $v_{n - 1}$ fires once in both firing sequences and these components are not affected by the presence or absence of $v_n$. The second-to-last component is also the same in both cases because the firing of $v_n$ on $G$ replenishes the chip that $v_{n - 1}$ sends to $v_n$ on $G$. Finally, the last component of the left-hand side is $c_n$ since $v_n$ loses a chip when it fires on $G$ and regains it when $v_{n - 1}$ fires. This completes the proof.

        % Note that $\bm{\chi}_{{N_{n - 1}}^{(G)}}^{(n)} = \left(\bm{\chi}_{{N_{n - 1}}^{(G \setminus v_n)}}^{(n - 1)}, 1\right)$ since $(v_{n - 1}, v_n) \in E$. Therefore, from \eqref{equation6}, we know that
        % \begin{equation}
        %     \begin{split}
        %         \bm{F}_\Phi^{(G)}\left(\left(\bm{c}^{(n - 2)}, c_{n - 1}, c_n\right)\right) &= \left(\left(\bm{c}^{(n - 2)}, c_{n - 1} - \left|N_{n - 1}^{(G \setminus v_n)}\right|\right) + \bm{\chi}_{{N_{n - 1}}^{(G \setminus v_n)}}^{(n - 1)}, c_n\right) \\
        %         &= \left(\bm{F}_{n - 1}^{(G \setminus v_n)}\left(\left(\bm{c}^{(n - 2)}, c_{n - 1}\right)\right), c_n\right).\notag
        %     \end{split}
        % \end{equation}
        % This completes the proof.
   % \end{enumerate}
\end{proof}

\section{Self-Reachable Configurations}\label{Sec:self}

%We begin this section by defining the concept of a self-reachable configuration. This concept will be used consistently throughout the rest of the report.

%\begin{definition}\label{selfreachableconfig}
 %   Let $G$ be a connected simple $n$-vertex graph. The chip configuration $\bm{s}^{(n)}$ is \emph{self-reachable} on $G$ if $\bm{s}^{(n)} \in R^{(G)}\left(\bm{s}^{(n)}\right)$. We define $S_\ell^{(G)}$ to be the set of all self-reachable configurations on $G$ with exactly $\ell$ chips.
%\end{definition}

In this section, our focus lies with self-reachable configurations. 
In particular, we will prove the following results on self-reachable configurations on trees.

\begin{theorem}\label{srcsubtreeresult}
    Let $T$ be an $n$-vertex tree. Then the configuration $\bm{s}^{(n)}$ is self-reachable on $T$ if and only if $\bm{s}^{(n)}$ has at least $m - 1$ chips on every $m$-vertex subtree of $T$.
\end{theorem}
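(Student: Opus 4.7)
The plan is to prove both implications separately, reducing the backward direction to a combinatorial question about edge orientations of $T$.

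For the forward direction, suppose $\bm{s}^{(n)}$ is self-reachable on $T$, so there is a nonempty legal firing sequence $\Phi$ with $\bm{F}_\Phi^{(T)}(\bm{s}^{(n)}) = \bm{s}^{(n)}$; equivalently, $\Delta(T)\sum_{j \in \Phi}\bm{e}_j^{(n)} = \bm{0}^{(n)}$. Since $T$ is connected, $\ker \Delta(T)$ is spanned by $\bm{1}^{(n)}$, so $\Phi$ fires every vertex of $T$ the same positive number of times $k$. Iterating Lemma \ref{firingsequencealgorithm} exactly $k - 1$ times then produces a legal firing sequence $\Psi$ that fires each vertex of $T$ exactly once and still returns to $\bm{s}^{(n)}$.

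Given an $m$-vertex subtree $T'$ of $T$, consider the subsequence $\Psi|_{T'}$ obtained by deleting every firing of a vertex outside $T'$. I would verify that $\Psi|_{T'}$ is a legal firing sequence on $T'$ (viewed as its own graph) starting from the restriction of $\bm{s}^{(n)}$ to $T'$: when $v \in T'$ fires in $\Psi$ on $T$ it has at least $\deg^{(T)}(v)$ chips, of which at most $\deg^{(T)}(v) - \deg^{(T')}(v)$ come from $T$-neighbors of $v$ lying outside $T'$, so $v$ has at least $\deg^{(T')}(v)$ chips contributed by its initial supply plus earlier-firing $T'$-neighbors, which is exactly its chip count at that moment in $\Psi|_{T'}$. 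Legality at $v$ in $\Psi|_{T'}$ then yields $s_v \geq |\{u \in N^{(T')}(v) : u \text{ fires after } v\}|$, and summing this over $v \in T'$ counts each edge of $T'$ exactly once (at its earlier-firing endpoint), giving $\sum_{v \in T'} s_v \geq |E(T')| = m - 1$.

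For the backward direction, I would reformulate the goal as producing an orientation of the edges of $T$ in which every vertex $v$ has out-degree at most $s_v$. Given such an orientation, it is automatically acyclic (since $T$ has no undirected cycles) and admits a linear extension $\sigma$; firing the vertices of $T$ in the order $\sigma$ gives each $v$ at its turn a chip count of $s_v + \text{in-deg}(v) \geq s_v + (\deg(v) - s_v) = \deg(v)$, making the firing legal. Lemma \ref{firingsequenceeachvertexonce} then forces the final configuration to equal $\bm{s}^{(n)}$, and the sequence is nonempty because $T$ has at least one vertex, so $\bm{s}^{(n)}$ is self-reachable.

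The orientation is built by induction on $n$, with the trivial base case $n = 1$. For $n \geq 2$, pick any leaf $v$ with neighbor $u$. If $s_v \geq 1$, orient $\{u, v\}$ from $v$ toward $u$ and recurse on $T \setminus v$ with the same chip supply; the subtree hypothesis carries over unchanged since every subtree of $T \setminus v$ is a subtree of $T$. If $s_v = 0$, the hypothesis applied to the $2$-vertex subtree $\{u, v\}$ forces $s_u \geq 1$, so orient $\{u, v\}$ from $u$ toward $v$ and recurse on $T \setminus v$ with $s_u$ decremented by $1$. The main obstacle is verifying the subtree hypothesis in this second case: for a subtree $T''$ of $T \setminus v$ with $u \in T''$, observe that $T'' \cup \{v\}$ is a subtree of $T$ on $|T''| + 1$ vertices, so the original hypothesis gives $\sum_{w \in T''} s_w = \sum_{w \in T'' \cup \{v\}} s_w \geq |T''|$ (using $s_v = 0$), which exactly absorbs the decrement to yield $\sum_{w \in T''} s_w - 1 \geq |T''| - 1$.
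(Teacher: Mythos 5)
Your proof is correct, and it takes a genuinely different route from the paper's in both directions. For the forward implication, the paper argues by contradiction: it selects a vertex-minimal subtree violating the chip bound, shows every vertex of that subtree holds fewer chips than its subtree-degree, and then derives a contradiction by examining the first vertex of that subtree to fire in a fire-each-vertex-once sequence (obtained from Proposition \ref{srcfireeachvertexonce}). You instead argue directly: after extracting a fire-once sequence (your kernel argument plus iterated use of Lemma \ref{firingsequencealgorithm} is a legitimate substitute for the paper's Proposition \ref{srcfireeachvertexonce}), you restrict it to the subtree $T'$, check legality survives restriction, and charge each edge of $T'$ to its earlier-firing endpoint to get $\sum_{v \in T'} s_v \ge m - 1$; this quantitative edge-counting avoids the minimality trick entirely (and in fact the inequality $s_v \ge \#\{T'\text{-neighbors firing after } v\}$ already follows from legality in the ambient sequence, so the restriction step could be compressed further). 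For the backward implication, the paper inducts on leaves, splits on whether the leaf holds a chip, and explicitly splices the leaf's firing into a legal sequence on the smaller tree via Lemma \ref{treeinductionbasis}, with detailed legality bookkeeping. You instead reformulate self-reachability as the existence of an orientation of $T$ with out-degree at most $s_v$ at every vertex -- a Hakimi-type degree-constrained orientation criterion -- build the orientation by the same leaf-removal induction (your cases $s_v \ge 1$ and $s_v = 0$ mirror the paper's cases $c_{k+1} > 0$ and $c_{k+1} = 0$, including the same ``absorb the decrement into the enlarged subtree'' computation), and then convert it into a legal firing order by a topological sort, finishing with Lemma \ref{firingsequenceeachvertexonce}. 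The paper's approach stays entirely inside firing-sequence manipulations; yours cleanly separates the combinatorial existence statement from the chip-firing legality check, and it makes explicit the pleasant equivalence that a configuration on a tree is self-reachable if and only if such an orientation exists.
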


\begin{theorem}\label{srcmutualreachability}
    Let $T$ be an $n$-vertex tree, and let $\bm{s}^{(n)}$ be a self-reachable chip configuration on $T$. Then for any chip configuration $\bm{c}^{(n)}$ with the same number of chips as $\bm{s}^{(n)}$, $\bm{c}^{(n)} \in R^{(T)}\left(\bm{s}^{(n)}\right)$ if and only if $\bm{c}^{(n)}$ is also self-reachable.
\end{theorem}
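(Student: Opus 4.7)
The claim is an iff; I handle each direction in turn, treating Theorem \ref{srcsubtreeresult} (the subtree criterion for self-reachability on trees) as a black box. For the forward direction, assume $\bm{c}^{(n)} \in R^{(T)}(\bm{s}^{(n)})$ with $\bm{s}^{(n)}$ self-reachable. I show $\bm{c}^{(n)}$ also satisfies the subtree criterion and is hence self-reachable. Since a legal firing sequence is built from single legal firings, it suffices to verify that the criterion is preserved under one legal firing. Fix an $m$-vertex subtree $T'$ and legally fire a vertex $v$. If $v \notin T'$, firing $v$ only donates chips to $T'$. If $v \in T'$, let $r$ be the number of neighbors of $v$ outside $T'$ and write $T' \setminus \{v\}$ as a disjoint union of subtrees $C_1, \ldots, C_{m'}$ of $T$, with $m' + r = \deg(v)$. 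The criterion on $\bm{s}^{(n)}$ yields $\sum_{v_j \in C_i} s_j \geq |C_i| - 1$ for each $i$, and legality gives $s_v \geq \deg(v)$; summing, $\sum_{v_j \in T'} s_j \geq (m' + r) + (m - 1 - m') = m - 1 + r$, so the post-firing total on $T'$ is at least $m - 1$.

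For the reverse direction, assume both $\bm{s}^{(n)}$ and $\bm{c}^{(n)}$ are self-reachable with equal total chip counts. Since $T$ is a tree, any two integer chip configurations on $T$ with the same total are linearly equivalent: $\bm{c}^{(n)} = \bm{s}^{(n)} - \Delta(T)\bm{x}$ for some $\bm{x} \in \mathbb{Z}^n$ (the sandpile group of a tree is trivial). Replacing $\bm{x}$ by $\bm{x} - (\min_i x_i)\bm{1}^{(n)}$, I may assume $\bm{x} \geq 0$ with $\min_i x_i = 0$. I induct on $\sum_i x_i$. The base case $\bm{x} = \bm{0}$ gives $\bm{s}^{(n)} = \bm{c}^{(n)}$, and any nonempty self-firing sequence from $\bm{s}^{(n)}$ witnesses $\bm{c}^{(n)} \in R^{(T)}(\bm{s}^{(n)})$. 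Otherwise, the key step is to locate a vertex $v$ with $x_v \geq 1$ that is legally firable from $\bm{s}^{(n)}$: firing $v$ yields $\bm{s}^* = \bm{F}_{\{v\}}^{(T)}(\bm{s}^{(n)})$, still self-reachable by the forward direction, and the inductive hypothesis applied to $(\bm{s}^*, \bm{c}^{(n)})$ with firing vector $\bm{x} - \bm{e}_v^{(n)}$ (which remains nonnegative with $\min = 0$, since a vertex originally attaining the minimum is unaffected) produces a legal sequence from $\bm{s}^*$ to $\bm{c}^{(n)}$; prepending $v$ completes the construction.

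The main obstacle is the existence of this firable $v$. Let $J = \{i : x_i \geq 1\}$ and let $S$ be a connected component of the subgraph of $T$ induced by $J$; since $\min_i x_i = 0$, $J$ is a proper subset of the vertex set, so $S$ has $b \geq 1$ boundary edges in $T$ to $V \setminus J$. Summing $(\bm{c}^{(n)} - \bm{s}^{(n)})_i = -(\Delta(T)\bm{x})_i$ over $v_i \in S$, the contributions of edges internal to $S$ cancel and we obtain $\sum_{v_i \in S}(c_i - s_i) = -\sum_{v_i \in S} r_i x_i$, where $r_i$ is the number of neighbors of $v_i$ outside $S$. Since $x_i \geq 1$ for $v_i \in S$ and $\sum_{v_i \in S} r_i = b$, we get $\sum_{v_i \in S} s_i \geq \sum_{v_i \in S} c_i + b \geq (m - 1) + b$ where $m = |S|$, by the subtree criterion on $\bm{c}^{(n)}$. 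On the other hand, if every $v_i \in S$ had $s_i < \deg(v_i)$, then $\sum_{v_i \in S} s_i \leq \sum_{v_i \in S}(\deg(v_i) - 1) = 2(m - 1) + b - m = m + b - 2$ (using that the subtree $S$ has $m - 1$ internal edges and $b$ boundary edges in $T$), contradicting the lower bound. Hence some firable $v_i \in S$ with $x_i \geq 1$ exists, completing the induction.
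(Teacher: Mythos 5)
Your proposal is correct, and it takes a genuinely different route from the paper's proof. For the forward direction, the paper invokes Lemma \ref{closureofsrcs} (whose proof rests on Proposition \ref{srcfireeachvertexonce}), whereas you re-derive it by showing that the subtree criterion of Theorem \ref{srcsubtreeresult} is invariant under a single legal firing, via a clean local edge count; this is valid and non-circular, since the paper's proof of Theorem \ref{srcsubtreeresult} does not depend on Theorem \ref{srcmutualreachability}. For the reverse direction, the paper does structural induction on the number of vertices: it strips a leaf $v_{k+1}$, splits into three cases according to the chips on the leaf, and lifts firing sequences from $T \setminus v_{k+1}$ back to $T$ using Lemma \ref{treeinductionbasis} and Lemma \ref{everyvertexcaneventuallyfire}. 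You instead argue globally: writing $\bm{c}^{(n)} = \bm{s}^{(n)} - \Delta(T)\bm{x}$ with $\bm{x} \ge 0$ and $\min_i x_i = 0$, and inducting on $\sum_i x_i$, with the key step --- the existence of a legally firable vertex inside the support of $\bm{x}$ --- secured by the edge-counting contradiction $\sum_{v_i \in S} s_i \ge m - 1 + b$ versus $\sum_{v_i \in S} s_i \le m + b - 2$. Both arguments are complete; yours is shorter and essentially greedy (a ``least-action'' style construction of the connecting sequence) and avoids the case analysis, but it imports one fact external to the paper's toolkit, namely that two configurations on a tree with equal totals are related by an integer image of the Laplacian (equivalently, the sandpile group of a tree is trivial, i.e., the reduced Laplacian is unimodular). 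That fact is standard and easy to prove by leaf induction, but you should either cite it (e.g., Klivans) or include the two-line proof, since nothing in the paper establishes it; the paper's proof, by contrast, is longer but entirely self-contained in its elementary lemmas.
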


The significance of this second result in particular is that, in the case of trees, all self-reachable configurations lie in the same orbit. 
Therefore, one can simultaneously consider all self-reachable configurations on a tree with the same number of chips. 
In order to show these results, we need to build up some machinery  regarding self-reachable configurations, not only on trees, but on connected simple graphs in general. 
We begin with the following existence result on legal firing sequences for self-reachable configurations.

\begin{proposition}\label{srcfireeachvertexonce}
    Let $G$ be a connected simple $n$-vertex graph, and let $\bm{s}^{(n)}$ be a self-reachable configuration on $G$. Let $v_i$ be a vertex that can legally fire on $G$ starting from $\bm{s}^{(n)}$. Then there exists a legal firing sequence $\Phi = \{\phi_1, \phi_2, \ldots, \phi_n\}$ on $G$ starting from $\bm{s}^{(n)}$ such that:

    \begin{enumerate}
        \item $\Phi$ contains each natural number between 1 and $n$ exactly once, and

        \item $\phi_1 = i$.
    \end{enumerate}
\end{proposition}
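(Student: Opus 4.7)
The plan is to build such a $\Phi$ in two stages: first produce any legal length-$n$ firing sequence from $\bm{s}^{(n)}$ that fires each vertex exactly once, then rearrange it so $v_i$ comes first.

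For the first stage, I would exploit the self-reachable property to obtain a nonempty legal sequence $\Sigma$ with $\bm{F}_\Sigma^{(G)}\!\left(\bm{s}^{(n)}\right) = \bm{s}^{(n)}$. Writing $\bm{a} = \sum_{j \in \Sigma} \bm{e}_j^{(n)}$, this identity amounts to $\Delta(G)\bm{a} = \bm{0}$, and since $G$ is connected the kernel of $\Delta(G)$ is one-dimensional and spanned by $\bm{1}^{(n)}$ (a standard spectral fact, provable in a line by examining a maximum-valued coordinate of $\bm{a}$). Combined with $\bm{a} \in \mathbb{Z}_{\ge 0}^{n}$ and the nonemptiness of $\Sigma$, this forces $\bm{a} = k\bm{1}^{(n)}$ for some integer $k \ge 1$, so every vertex fires exactly $k$ times in $\Sigma$. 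Iterating Lemma \ref{firingsequencealgorithm} a total of $k - 1$ times peels off a full round of firings at each step while preserving both legality and the returning property, yielding a legal firing sequence $\Phi = \{\phi_1, \phi_2, \ldots, \phi_n\}$ on $G$ from $\bm{s}^{(n)}$ that fires each vertex exactly once.

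For the second stage, if $\phi_1 = i$ we are done, so suppose $\phi_j = i$ for some $j \ge 2$ and set
\[
\Phi' = \{i,\ \phi_1,\ \phi_2,\ \ldots,\ \phi_{j-1},\ \phi_{j+1},\ \ldots,\ \phi_n\}.
\]
Firing $v_i$ from $\bm{s}^{(n)}$ is legal by hypothesis, reaching the configuration $\bm{d} := \bm{s}^{(n)} - \Delta(G)\bm{e}_i^{(n)}$. For each $\ell \in \{1, \ldots, j-1\}$ the index $\phi_\ell$ is distinct from $i$ (since $\Phi$ contains $i$ only at position $j$), and
\[
\left(\bm{d} - \bm{s}^{(n)}\right) \cdot \bm{e}_{\phi_\ell}^{(n)} = -\Delta(G)_{\phi_\ell, i} \in \{0, 1\}.
\]
Thus Proposition \ref{LegalityProp} certifies that $\{\phi_1, \ldots, \phi_{j-1}\}$ is legal starting from $\bm{d}$, and Proposition \ref{AdditivityOfFiring} identifies the resulting configuration with $\bm{F}_{\{\phi_1, \ldots, \phi_j\}}^{(G)}\!\left(\bm{s}^{(n)}\right)$ — precisely the intermediate state $\Phi$ reaches after its first $j$ firings. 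The tail $\{\phi_{j+1}, \ldots, \phi_n\}$ is then legal from this state as part of the original legal $\Phi$, so $\Phi'$ is legal, has length $n$, fires each vertex exactly once, and begins with $i$.

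The main obstacle I anticipate is exactly the legality check in the second stage: one must certify that promoting $v_i$ to the first position does not derail any of the firings that originally preceded it. The reason this is clean rather than painful is the monotonicity packaged in Proposition \ref{LegalityProp}: firing $v_i$ only \emph{adds} chips at its neighbors (and removes chips at $v_i$ itself, which is irrelevant because $v_i$ does not appear again in the prefix $\{\phi_1, \ldots, \phi_{j-1}\}$), so the hypothesis of the proposition holds automatically. The first stage is routine given the results already developed in Section \ref{sec:firingGen}, once one accepts the connected-Laplacian kernel fact.
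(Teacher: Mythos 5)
Your proof is correct, and it shares the paper's two-stage skeleton: first produce a legal sequence firing each vertex exactly once, then promote $v_i$ to the front. Your second stage is essentially the paper's argument with Lemma \ref{AbelianProcessII} unwound — the paper cites that lemma (disjoint legal sequences concatenate legally) together with Lemma \ref{abelianprocess}, and the proof of that lemma is precisely the Proposition \ref{LegalityProp}/Proposition \ref{AdditivityOfFiring} computation you carry out by hand, including the key observation that $-\Delta(G)_{\phi_\ell, i} \ge 0$ whenever $\phi_\ell \neq i$. The genuine divergence is in stage one. The paper stays inside its own toolkit: it uses Lemma \ref{firingsequencealgorithm} to reduce, without loss of generality, to a nonempty legal returning sequence in which some vertex $v_t$ never fires, then notes that no neighbor of $v_t$ can fire (otherwise $v_t$ would finish with more chips than it started with), and propagates this by connectedness to conclude the sequence is empty, a contradiction. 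You instead invoke the spectral fact that a connected graph's Laplacian has kernel $\mathrm{span}\left\{\bm{1}^{(n)}\right\}$, so the firing-count vector equals $k\bm{1}^{(n)}$ for an integer $k \ge 1$, and then peel off $k - 1$ full rounds via Lemma \ref{firingsequencealgorithm} (the paper performs this same final peeling). Your route is shorter and explains structurally why every vertex must fire equally often; its cost is importing a fact the paper never establishes — though your maximum-coordinate sketch is a complete proof of it, and it is morally the same propagation-by-connectivity argument the paper runs at a zero coordinate using nonnegativity of the firing counts. The legality checks, the identification of the intermediate configuration with $\bm{F}_{\{\phi_1, \ldots, \phi_j\}}^{(G)}\left(\bm{s}^{(n)}\right)$, and the iteration count in your first stage are all accurate.
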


\begin{proof}
First we show that there exists a legal firing sequence $\Psi$ on $G$ starting from $\bm{s}^{(n)}$ that satisfies (1).
By Definition \ref{selfreachableconfig}, for some legal $n$-vertex firing sequence $\Pi$,
        \begin{equation}
            \bm{F}_\Pi^{(G)}\left(\bm{s}^{(n)}\right) = \bm{s}^{(n)}.\notag
        \end{equation}
        Suppose towards contradiction that $\Pi$ does not contain each integer between 1 and $n$ the same number of times. Then by Lemma \ref{firingsequencealgorithm}, we can assume without loss of generality that for some $t \in \mathbb{N}_{[1, n]}$, $t \notin \Pi$, meaning $v_t$ never fires, and also that $\Pi \neq \varnothing$. Since $v_t$ never fires, we know that none of the neighbors of $v_t$ can fire or else $v_t$ will end up with more chips than it started with. By the same logic, the neighbors of these neighbors cannot fire either. By iterating this argument, we deduce that no vertex on $G$ can ever fire because $G$ is connected, so $\Pi = \varnothing$, which is a contradiction. Thus, $\Pi$ contains each integer between 1 and $n$ the same number of times, and thus by Lemma \ref{firingsequencealgorithm}, we can reduce $\Pi$ to a legal $n$-vertex firing sequence that fires each vertex exactly once by removing all but the last occurrence of each element of $\mathbb{N}_{[1, n]}$ from $\Pi$. We let $\Psi$ be the resulting firing sequence, which satisfies (1).
        
        We now show that we can reorder the elements of $\Psi$ to form a new firing sequence that satisfies (2) while preserving legality.
        Without loss of generality, we can relabel the vertices of $G$ such that $\Psi = \mathbb{N}_{[1, n]}$, and we assume that $v_i$ is chosen under this new vertex labeling. Then $\Phi = \{i, 1, 2, \ldots, i - 1, i + 1, \ldots, n\}$ satisfies both (1) and (2), so we only need to check that $\Phi$ is a legal firing sequence on $G$ starting from $\bm{s}^{(n)}$. Because $\Psi = \mathbb{N}_{[1, n]}$ is a legal firing sequence on $G$ starting from $\bm{s}^{(n)}$, the sequence $\mathbb{N}_{[1, i - 1]}$ is also a legal firing sequence on $G$ starting from $\bm{s}^{(n)}$. Since the firing sequence $\{i\}$ is legal on $G$ starting from $\bm{s}^{(n)}$, Lemma \ref{AbelianProcessII} guarantees that $\left\{\{i\}, \mathbb{N}_{[1, i - 1]}\right\} = \{i, 1, 2, \ldots, i - 1\}$ is a legal firing sequence on $G$ starting from $\bm{s}^{(n)}$. Also note that
        \begin{align*}
            \bm{F}_{\left\{\{i\}, \mathbb{N}_{[1, i - 1]}\right\}}^{(G)}\left(\bm{s}^{(n)}\right) = \bm{F}_{\mathbb{N}_{[1, i]}}^{(G)}\left(\bm{s}^{(n)}\right)
        \end{align*}
%%%
        by Lemma \ref{abelianprocess}. Since $\Psi$ is a legal firing sequence on $G$ starting from $\bm{s}^{(n)}$, we have that $\mathbb{N}_{[i + 1, n]}$ is a legal firing sequence on $G$ starting from $\bm{F}_{\mathbb{N}_{[1, i]}}^{(G)}\left(\bm{s}^{(n)}\right)$. This guarantees that $\mathbb{N}_{[i + 1, n]}$ is a legal firing sequence on $G$ starting from $\bm{F}_{\left\{\{i\}, \mathbb{N}_{[1, i - 1]}\right\}}^{(G)}\left(\bm{s}^{(n)}\right)$ as well, meaning $\left\{\{i\}, \mathbb{N}_{[1, i - 1]}, \mathbb{N}_{i + 1}^{n}\right\}$ is a legal firing sequence on $G$ starting from $\bm{s}^{(n)}$. But $\left\{\{i\}, \mathbb{N}_{[1, i - 1]}, \mathbb{N}_{i + 1}^{n}\right\} = \Phi$, so the proof is complete.
        
\end{proof}

The following technical lemma shows that a legal firing of a vertex in a self-reachable configuration yields another self-reachable configuration.
More significantly, this allows us to conclude that only self-reachable configurations can be obtained from a self-reachable configuration and further that these self-reachable configurations are mutually reachable from one another.

\begin{lemma}\label{closureofsrcs}
   Let $G$ be a connected simple $n$-vertex graph, and let $\bm{s}^{(n)}$ be a self-reachable configuration on $G$. Suppose it is legal to fire $v_i$ on $G$ starting from $\bm{s}^{(n)}$. Then:

    \begin{enumerate}
        \item $\bm{s}^{(n)} \in R^{(G)}\left(\bm{F}_{\{i\}}^{(G)}\left(\bm{s}^{(n)}\right)\right)$, and
        
        \item $\bm{F}_{\{i\}}^{(G)}\left(\bm{s}^{(n)}\right)$ is self-reachable on $G$.
        
        \item For all $\bm{c}^{(n)} \in R^{(G)}\left(\bm{s}^{(n)}\right)$, $\bm{c}^{(n)}$ is self-reachable.
        
        \item For all $\bm{c}^{(n)} \in R^{(G)}\left(\bm{s}^{(n)}\right)$, $\bm{s}^{(n)} \in R^{(G)}\left(\bm{c}^{(n)}\right)$
    \end{enumerate}

\end{lemma}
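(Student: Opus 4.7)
The plan is to obtain (1) directly from Proposition \ref{srcfireeachvertexonce} and Lemma \ref{firingsequenceeachvertexonce}, derive (2) from (1) by a single concatenation, and then establish (3) and (4) simultaneously by induction on the length of a legal firing sequence from $\bm{s}^{(n)}$ to $\bm{c}^{(n)}$.

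For (1), I would apply Proposition \ref{srcfireeachvertexonce} with the vertex $v_i$ to obtain a legal firing sequence $\Phi = \{i, \phi_2, \ldots, \phi_n\}$ on $G$ starting from $\bm{s}^{(n)}$ that fires each of $v_1, \ldots, v_n$ exactly once. Lemma \ref{firingsequenceeachvertexonce} then gives $\bm{F}_\Phi^{(G)}(\bm{s}^{(n)}) = \bm{s}^{(n)}$. Setting $\Psi = \{\phi_2, \ldots, \phi_n\}$, the definition of legality of $\Phi$ immediately forces $\Psi$ to be legal on $G$ starting from $\bm{F}_{\{i\}}^{(G)}(\bm{s}^{(n)})$, and unpacking the definitions gives $\bm{F}_\Psi^{(G)}\!\left(\bm{F}_{\{i\}}^{(G)}(\bm{s}^{(n)})\right) = \bm{F}_\Phi^{(G)}(\bm{s}^{(n)}) = \bm{s}^{(n)}$. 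When $n \geq 2$, $\Psi$ is nonempty and this witnesses (1); the degenerate case $n = 1$ is handled separately by noting that the sole vertex has degree $0$, so firing it is a nonempty legal sequence that leaves the configuration unchanged. For (2), let $\bm{t}^{(n)} = \bm{F}_{\{i\}}^{(G)}(\bm{s}^{(n)})$ and let $\Psi'$ be the nonempty legal firing sequence from $\bm{t}^{(n)}$ to $\bm{s}^{(n)}$ just produced. Because firing $v_i$ is legal from $\bm{s}^{(n)}$ by hypothesis, the concatenation $\{\Psi', i\}$ is a nonempty legal firing sequence on $G$ starting from $\bm{t}^{(n)}$ that returns to $\bm{t}^{(n)}$, so $\bm{t}^{(n)}$ is self-reachable.

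For (3) and (4), I would induct on the length $k \geq 1$ of a legal firing sequence $\Pi = \{\pi_1, \ldots, \pi_k\}$ with $\bm{F}_\Pi^{(G)}(\bm{s}^{(n)}) = \bm{c}^{(n)}$. The base case $k = 1$ is precisely (1) and (2) applied to $\bm{s}^{(n)}$ with $v_i = v_{\pi_1}$. For the inductive step, set $\bm{d}^{(n)} = \bm{F}_{\{\pi_1, \ldots, \pi_{k-1}\}}^{(G)}(\bm{s}^{(n)})$; by the inductive hypothesis, $\bm{d}^{(n)}$ is self-reachable and $\bm{s}^{(n)} \in R^{(G)}(\bm{d}^{(n)})$. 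Since firing $v_{\pi_k}$ is legal from $\bm{d}^{(n)}$, applying (2) to $\bm{d}^{(n)}$ with the vertex $v_{\pi_k}$ gives that $\bm{c}^{(n)} = \bm{F}_{\{\pi_k\}}^{(G)}(\bm{d}^{(n)})$ is self-reachable, while applying (1) gives $\bm{d}^{(n)} \in R^{(G)}(\bm{c}^{(n)})$. Concatenating this nonempty sequence from $\bm{c}^{(n)}$ back to $\bm{d}^{(n)}$ with the one witnessing $\bm{s}^{(n)} \in R^{(G)}(\bm{d}^{(n)})$ yields $\bm{s}^{(n)} \in R^{(G)}(\bm{c}^{(n)})$, completing the induction.

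The main (mild) obstacle is purely bookkeeping: ensuring that the remainder $\Psi$ in (1) is both legal from $\bm{F}_{\{i\}}^{(G)}(\bm{s}^{(n)})$ and nonempty (with the $n = 1$ edge case being the only subtlety), and invoking transitivity of reachability via concatenation of nonempty legal firing sequences throughout the induction.
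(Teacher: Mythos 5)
Your proposal is correct and follows essentially the same route as the paper: part (1) via Proposition \ref{srcfireeachvertexonce} together with Lemma \ref{firingsequenceeachvertexonce} by peeling off the initial firing of $v_i$, part (2) by concatenating the return sequence with one more firing of $v_i$, and parts (3)--(4) by iterating (1) and (2) along the firing sequence from $\bm{s}^{(n)}$ to $\bm{c}^{(n)}$. Your version is in fact slightly more careful than the paper's, which dismisses $n=1$ as trivial and phrases your induction informally as ``repeated application'' of the observation.
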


\begin{proof}
    %We first consider the case where $n = 1$. Note that chip firing on a one-vertex graph does not change the chip configuration, so (1) and (2) both follow from the fact that $\bm{s}^{(n)}$ is self-reachable on $G$. For $n \ge 2$, we prove (1) and (2) separately:
    The case of $n=1$ is trivial, so let $n\geq 2$.
    To show (1), let $\bm{d}^{(n)} = \bm{F}_{\{i\}}^{(G)}\left(\bm{s}^{(n)}\right)$. We know by Proposition \ref{srcfireeachvertexonce} that there exists a legal firing sequence $\Phi = \{\phi_1, \phi_2, \ldots, \phi_n\}$ that fires each vertex of $G$ exactly once with
        \begin{equation}
            \bm{F}_{\Phi}^{(G)}\left(\bm{s}^{(n)}\right) = \bm{s}^{(n)}.\notag
        \end{equation}
        and $\phi_1 = i$. Then if we let $\Psi = \{\phi_2, \phi_3, \ldots, \phi_n\}$, we have that
        \begin{equation}
            \bm{s}^{(n)} = \bm{F}_{\Psi}^{(G)}\left(\bm{d}^{(n)}\right).\notag
        \end{equation}

        Since $\Psi$ is nonempty for $n \ge 2$, this proves (1).
        
        To see (2), observe that
        \begin{align*}
            \bm{F}_{\{\Psi, \{i\}\}}^{(G)}\left(\bm{d}^{(n)}\right) = \bm{F}_{\{i\}}^{(G)}\left(\bm{s}^{(n)}\right) = \bm{d}^{(n)},
        \end{align*}
        which implies $\bm{d}^{(n)}$ is self-reachable on $G$. %This completes the proof.
        Moreover, note that (3) follows immediately by repeated application of this observation.
        
       To see (4), let $\Phi = \{\phi_i\}_{i = 1}^k$ be a legal firing sequence on $G$ such that $\bm{c}^{(n)} = \bm{F}_{\Phi}\left(\bm{s}^{(n)}\right)$. 
       By (1), we know that for each $j \in \mathbb{N}_{[1, k]}$,
    \begin{equation}
        \bm{F}_{\{\phi_i\}_{i = 1}^{j - 1}}\left(\bm{s}^{(n)}\right) \in R^{(G)}\left(\bm{F}_{\{\phi_i\}_{i = 1}^{j}}\left(\bm{s}^{(n)}\right)\right).\notag
    \end{equation}
    Therefore, we can reach $\bm{F}_{\{\phi_i\}_{i = 1}^{k - 1}}\left(\bm{s}^{(n)}\right)$ from $\bm{c}^{(n)}$, $\bm{F}_{\{\phi_i\}_{i = 1}^{k - 2}}\left(\bm{s}^{(n)}\right)$ from $\bm{F}_{\{\phi_i\}_{i = 1}^{k - 1}}\left(\bm{s}^{(n)}\right)$, and so on, until we can reach $\bm{s}^{(n)}$ from $\bm{F}_{\{\phi_1\}}\left(\bm{s}^{(n)}\right)$. This completes the proof.

%    \begin{enumerate}
%        \item We let $\bm{d}^{(n)} = \bm{F}_{\{i\}}^{(G)}\left(\bm{s}^{(n)}\right)$. We know by Proposition \ref{srcfireeachvertexonce} that there exists a legal firing sequence $\Phi = \{\phi_j\}_{j = 1}^n$ that fires each vertex of $G$ exactly once with
%        \begin{equation}
%            \bm{F}_{\Phi}^{(G)}\left(\bm{s}^{(n)}\right) = \bm{s}^{(n)}.\notag
%        \end{equation}
%        and $\phi_1 = i$. Then if we let $\Psi = \{\phi_2, \phi_3, \ldots, \phi_n\}$, we have that
%        \begin{equation}
%            \bm{s}^{(n)} = \bm{F}_{\Psi}^{(G)}\left(\bm{d}^{(n)}\right).\notag
%        \end{equation}

%        Since $\Phi$ is nonempty for $n \ge 2$, this proves (1).

 %       \item If we let $\Pi = \{\Psi, \{i\}\}$, then

 %       \begin{align*}
 %           \bm{F}_{\Pi}^{(G)}\left(\bm{d}^{(n)}\right) = \bm{F}_{\{i\}}^{(G)}\left(\bm{s}^{(n)}\right) = \bm{d}^{(n)}.
 %       \end{align*}
%
  %      Therefore, $\bm{d}^{(n)}$ is self-reachable on $G$. This completes the proof.
   
   % \end{enumerate}

\end{proof}

As an aside, the results of Lemma \ref{closureofsrcs} immediately yield that reachability forms an equivalence relation on the set of self-reachable configurations. 

\begin{corollary}\label{srcequivalencerelation}
    Let $G$ be a connected simple $n$-vertex graph. Define a relation $\sim$ on $S_\ell^{(G)}$ as follows: $\bm{s}_1^{(n)} \sim \bm{s}_2^{(n)}$ if $\bm{s}_1^{(n)} \in R^{(G)}\left(\bm{s}_2^{(n)}\right)$. Then $\sim$ is an equivalence relation on $S_\ell^{(G)}$.
\end{corollary}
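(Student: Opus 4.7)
The plan is to verify each of the three axioms of an equivalence relation in turn, drawing on the machinery already established in Lemma \ref{closureofsrcs}. Since the relation $\sim$ is defined on the set $S_\ell^{(G)}$ of self-reachable configurations with exactly $\ell$ chips, we have immediate access to the hypothesis that every element under consideration is self-reachable, which is essential for invoking Lemma \ref{closureofsrcs}.

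\textbf{Reflexivity.} For any $\bm{s}^{(n)} \in S_\ell^{(G)}$, the containment $\bm{s}^{(n)} \in R^{(G)}\!\left(\bm{s}^{(n)}\right)$ is exactly Definition \ref{selfreachableconfig}, so $\bm{s}^{(n)} \sim \bm{s}^{(n)}$.

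\textbf{Symmetry.} Suppose $\bm{s}_1^{(n)} \sim \bm{s}_2^{(n)}$, that is, $\bm{s}_1^{(n)} \in R^{(G)}\!\left(\bm{s}_2^{(n)}\right)$. Since $\bm{s}_2^{(n)} \in S_\ell^{(G)}$ is self-reachable, Lemma \ref{closureofsrcs}(4) applies with $\bm{s}^{(n)} = \bm{s}_2^{(n)}$ and $\bm{c}^{(n)} = \bm{s}_1^{(n)}$, yielding $\bm{s}_2^{(n)} \in R^{(G)}\!\left(\bm{s}_1^{(n)}\right)$, i.e., $\bm{s}_2^{(n)} \sim \bm{s}_1^{(n)}$.

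\textbf{Transitivity.} Suppose $\bm{s}_1^{(n)} \sim \bm{s}_2^{(n)}$ and $\bm{s}_2^{(n)} \sim \bm{s}_3^{(n)}$. Then there exist nonempty legal firing sequences $\Phi$ on $G$ starting from $\bm{s}_3^{(n)}$ with $\bm{F}_\Phi^{(G)}\!\left(\bm{s}_3^{(n)}\right) = \bm{s}_2^{(n)}$, and $\Psi$ on $G$ starting from $\bm{s}_2^{(n)}$ with $\bm{F}_\Psi^{(G)}\!\left(\bm{s}_2^{(n)}\right) = \bm{s}_1^{(n)}$. The concatenation $\{\Phi, \Psi\}$ is nonempty, and its legality follows directly from the definition of a legal firing sequence: the $\Phi$-portion is legal on $\bm{s}_3^{(n)}$ by hypothesis, and after executing $\Phi$ we arrive at $\bm{s}_2^{(n)}$, from which $\Psi$ is legal. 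Since $\bm{F}_{\{\Phi,\Psi\}}^{(G)}\!\left(\bm{s}_3^{(n)}\right) = \bm{s}_1^{(n)}$, we conclude $\bm{s}_1^{(n)} \in R^{(G)}\!\left(\bm{s}_3^{(n)}\right)$, i.e., $\bm{s}_1^{(n)} \sim \bm{s}_3^{(n)}$.

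There is no serious obstacle here; essentially all the work was already done in Lemma \ref{closureofsrcs}, whose part (4) is precisely the symmetry statement in disguise. The only thing to be mildly careful about is the nonemptiness requirement in the definition of $R^{(G)}$, but this is preserved under concatenation (for transitivity) and is the entire content of the definition (for reflexivity on $S_\ell^{(G)}$).
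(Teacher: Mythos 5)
Your proof is correct and follows the same route the paper intends: the paper gives no written argument, asserting the corollary is immediate from Lemma \ref{closureofsrcs}, and your write-up simply makes that explicit (reflexivity from Definition \ref{selfreachableconfig}, symmetry from Lemma \ref{closureofsrcs}(4), transitivity from concatenation of legal firing sequences). Your closing remark correctly identifies the only delicate point, namely that nonemptiness of firing sequences is what makes reflexivity require self-reachability rather than being automatic.
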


%\begin{proof}
%    Reflexivity follows from the definition of self-reachability and symmetry follows from Lemma \ref{closureofsrcs}
    %\ref{mutualreachabilitylemma}. 
%    Transitivity follows from the definition of reachability.
%\end{proof}

We now have the necessary tools to prove the first main result of this section, Theorem \ref{srcsubtreeresult}.

\begin{proof}[Proof of Theorem \ref{srcsubtreeresult}]
    To prove sufficiency, we assume that $\bm{c}^{(n)}$ is such that for some subtree $T^*$ of $T$ with $m$ vertices, $\bm{c}^{(n)}$ has fewer than $m - 1$ chips on $T^*$. We choose $T^*$ to have the smallest possible number of vertices while still satisfying this property. Without loss of generality, suppose the vertices of $T^*$ are $v_1, v_2, \ldots, v_m$. Note that if $m = 1$, then $\bm{c}^{(n)}$ must have fewer than 0 chips on $v_1$, which is impossible because we are assuming $\bm{c}^{(n)}$ is a valid chip configuration. Therefore, we can assume $m \ge 2$. Suppose towards contradiction that on some vertex $v_i$ of $T^*$, $\bm{c}^{(n)}$ has at least $d = \deg^{(T^*)}(v_i)$ chips. Note that the graph $T^* \setminus v_i$ contains $d$ disjoint trees. Let $T_1^*, T_2^*, \ldots, T_d^*$ be the disjoint trees in $T^* \setminus v_i$, and let $m_j^*$ be the number of vertices on $T_j^*$ for each $j \in \mathbb{N}_{[1, d]}$. Because we chose $T^*$ to have the smallest possible number of vertices, we know that for each $j \in \mathbb{N}_{[1, d]}$, $\bm{c}^{(n)}$ has at least $m_j^* - 1$ chips on $T_j^*$. This makes for a total of at least
    \begin{equation}
        d + \sum_{j = 1}^{d}(m_j^* - 1) = d - d + \sum_{j = 1}^{d}m_j^* = m - 1\notag
    \end{equation}
    chips on $T^*$. This contradicts our assumption that $\bm{c}^{(n)}$ has fewer than $m - 1$ chips on $T^*$. Therefore, for each $i \in \mathbb{N}_{[1, m]}$, $\bm{c}^{(n)}$ has fewer than $\deg^{(T^*)}(v_i)$ chips on $v_i$. Now suppose towards contradiction that $\bm{c}^{(n)}$ is self-reachable on $T$. Then by Proposition \ref{srcfireeachvertexonce}, there is a legal $n$-vertex firing sequence $\Phi$ that fires each vertex of $T$ exactly once such that
    \begin{equation}
        \bm{F}_{\Phi}^{(T)}\left(\bm{c}^{(n)}\right) = \bm{c}^{(n)}.\notag
    \end{equation}
    Now let $a \in \mathbb{N}_{[1, m]}$ be the index of the first vertex in $T^*$ that fires in $\Phi$, and let $N = \deg^{(T)}(v_a) - \deg^{(T^*)}(v_a)$. Note that $N$ counts the number of vertices that neighbor $v_a$ in $T$ that are not in $T^*$. Based on what we just showed, $\bm{c}^{(n)}$ has at most $\deg^{(T^*)}(v_a) - 1 = \deg^{(T)}(v_a) - N - 1$ chips on $v_a$. Because $v_a$ is the first vertex to fire in $T^*$, at most $N$ neighbors of $v_a$ can fire in $\Phi$ before $v_a$ must fire itself. Therefore, $v_a$ can have at most $\deg^{(T)}(v_a) - 1$ chips right before it fires. This implies that the firing of $v_a$ is not legal, contradicting the legality of $\Phi$. Therefore, $\bm{c}^{(n)}$ cannot be self-reachable on $T$, which proves sufficiency.

    To prove necessity, we induct on the number of vertices in $T$. On a tree with one vertex, any configuration with 0 or more chips is trivially self-reachable because firing the single vertex is legal (the vertex has degree 0) and produces the same chip configuration. This completes our base case. For our induction hypothesis, we suppose that on any tree $T$ with $k$ vertices, a chip configuration $\bm{c}^{(k)}$ is self-reachable on $T$ only if $\bm{c}^{(k)}$ has at least $m - 1$ chips on every $m$-vertex subtree of $T$. We now suppose $T$ is a tree with $k + 1$ vertices. Without loss of generality, we can assume $v_{k + 1}$ is a leaf, and we let $v_t$ be the vertex with which it shares an edge. We let $\left(\bm{c}^{(k)}, c_{k + 1}\right)$ be a chip configuration on $T$ with the property that on every $m$-vertex subtree of $T$, $\left(\bm{c}^{(k)}, c_{k + 1}\right)$ has at least $m - 1$ chips. To complete the proof, we must consider the following two cases:
    \begin{enumerate}
    \item $c_{k + 1} > 0$
    \item $c_{k+1}=0$
    \end{enumerate}
    To handle (1), note that because all subtrees of $T \setminus v_{k + 1}$ are also subtrees of $T$, we know that $\bm{c}^{(k)}$ has at least $m - 1$ chips on every $m$-vertex subtree of $T \setminus v_{k + 1}$ and is therefore self-reachable on $T \setminus v_{k + 1}$ by our induction hypothesis. We let $\Phi$ be a legal firing sequence on $T \setminus v_{k + 1}$ that fires each vertex in $T \setminus v_{k + 1}$ exactly once such that
        \begin{equation}
            \bm{F}_{\Phi}^{(T \setminus v_{k + 1})}\left(\bm{c}^{(k)}\right) = \bm{c}^{(k)}.\notag
        \end{equation}
        Without loss of generality, we can label the vertices of $T \setminus v_{k + 1}$ such that $\Phi = \mathbb{N}_{[1, k]}$. We create a $k + 1$-vertex firing sequence $\Psi$ by adding $k + 1$ before $t$ in $\Phi$, so $\Psi = \{1, 2, \ldots, t - 1, k + 1, t, t + 1, \ldots, k\}$. Note by Lemma \ref{firingsequenceeachvertexonce} that
        \begin{equation}
            \bm{F}_{\Psi}^{(T)}\left(\left(\bm{c}^{(k)}, c_{k + 1}\right)\right) = \left(\bm{c}^{(k)}, c_{k + 1}\right).\notag
        \end{equation}
        Therefore, all that is left to prove is the legality of $\Psi$. Because $v_1, v_2, \ldots, v_{t - 1}$ do not neighbor $v_{k + 1}$, the legality of the first $t - 1$ firing moves in $\Psi$ follows from the fact that $\Phi$ is a legal firing sequence on $G$ starting from $\bm{c}^{(n)}$. The legality of $\Phi$ also implies that it is legal to fire $v_t$ on $T \setminus v_{k + 1}$ starting from $\bm{F}_{\mathbb{N}_{[1, t - 1]}}^{(T \setminus v_{k + 1})}\left(\bm{c}^{(k)}\right)$. So by Lemma \ref{treeinductionbasis}, $\{k + 1, t\}$ is a legal firing sequence on $T$ starting from $\bm{F}_{\mathbb{N}_{[1, t - 1]}}^{(T \setminus v_{k + 1})}\left(\bm{c}^{(k)}\right)$, and furthermore,
        \begin{align*}
            \bm{F}_{\left\{\mathbb{N}_{[1, t - 1]}, \{k + 1, t\}\right\}}^{(T)}\left(\left(\bm{c}^{(k)}, c_{k + 1}\right)\right) = \left(\bm{F}_{\mathbb{N}_{[1, t]}}^{(T \setminus v_{k + 1})}\left(\bm{c}^{(k)}\right), c_{k + 1}\right).
        \end{align*}
        Due to this equality, the legality of the remaining firing moves in $\Psi$ (which all involve vertices that are not and do not neighbor $v_{k + 1}$) follows from the legality of $\Phi$. Therefore, $\left(\bm{c}^{(k)}, c_{k + 1}\right)$ is self-reachable on $T$ in this case.

For (2), suppose towards contradiction that on some subtree $T^*$ of $T \setminus v_{k + 1}$ with $m$ vertices, the chip configuration $\bm{c}^{(k)} - \bm{e}_t^{(k)}$ has fewer than $m - 1$ chips. Because $\left(\bm{c}^{(k)}, c_{k + 1}\right)$ must have at least $m - 1$ chips on $T^*$ by assumption, this is only possible if $v_t$ is a vertex of $T^*$ and $\bm{c}^{(k)} - \bm{e}_t^{(k)}$ has exactly $m - 2$ chips on $T^*$. Then if we let $T^{**}$ be the subtree of $T$ formed by joining $v_{k + 1}$ to $v_t$ on $T^*$, we see that $\left(\bm{c}^{(k)}, c_{k + 1}\right)$ has $m - 1$ chips on $T^{**}$, an $(m + 1)$-vertex subtree, which is a contradiction. This means that $\bm{c}^{(k)} - \bm{e}_t^{(k)}$ has at least $m - 1$ chips on all $m$-vertex subtrees of $T \setminus v_{k + 1}$. Therefore, $\bm{c}^{(k)} - \bm{e}_t^{(k)}$ is self-reachable on $T \setminus v_{k + 1}$ by our induction hypothesis. We let $\Phi$ be a legal $k$-vertex firing sequence that fires each vertex in $T \setminus v_{k + 1}$ exactly once such that
        \begin{equation}
            \bm{F}_{\Phi}^{(T \setminus v_{k + 1})}\left(\bm{c}^{(k)} - \bm{e}_t^{(k)}\right) = \bm{c}^{(k)} - \bm{e}_t^{(k)}.\notag
        \end{equation}
        Without loss of generality, let $\Phi = \mathbb{N}_{[1, k]}$. We create a $k + 1$-vertex firing sequence $\Psi$ by adding $k + 1$ to the end of $\Phi$, so $\Psi = \mathbb{N}_{[1, k + 1]}$. By Lemma \ref{firingsequenceeachvertexonce},
        \begin{equation}
            \bm{F}_{\Psi}^{(T)}\left(\left(\bm{c}^{(k)}, c_{k + 1}\right)\right) = \left(\bm{c}^{(k)}, c_{k + 1}\right).\notag
        \end{equation}
        The last step is to prove that $\Psi$ is a legal firing sequence on $T$ starting from $\left(\bm{c}^{(k)}, c_{k + 1}\right)$. The legality of the first $t - 1$ firing moves in $\Psi$ follows from the legality of $\Phi$ (since none of these firing moves are affected by the presence of $v_{k + 1}$ or the number of chips on $v_t$). Now note that because $\Phi$ is a legal firing sequence on $T \setminus v_{k + 1}$ starting from $\bm{c}^{(k)} - \bm{e}_t^{(k)}$, it is legal to fire $v_t$ on $T \setminus v_{k + 1}$ starting from $\bm{F}_{\mathbb{N}_{[1, t - 1]}}^{(T \setminus v_{k + 1})}\left(\bm{c}^{(k)} - \bm{e}_t^{(k)}\right)$. This means that $\bm{e}_t^{(k)} \cdot \bm{F}_{\mathbb{N}_{[1, t - 1]}}^{(T \setminus v_{k + 1})}\left(\bm{c}^{(k)} - \bm{e}_t^{(k)}\right) \ge \deg^{(T \setminus v_{k + 1})}(v_t)$. By Proposition \ref{AdditivityOfFiring},
        \begin{align*}
            \bm{e}_t^{(k)} \cdot \bm{F}_{\mathbb{N}_{[1, t - 1]}}^{(T \setminus v_{k + 1})}\left(\bm{c}^{(k)} - \bm{e}_t^{(k)}\right) = \bm{e}_t^{(k)} \cdot \bm{F}_{\mathbb{N}_{[1, t - 1]}}^{(T \setminus v_{k + 1})}\left(\bm{c}^{(k)}\right) - 1.
        \end{align*}
        Hence,
        \begin{align*}
            \bm{e}_t^{(k)} \cdot \bm{F}_{\mathbb{N}_{[1, t - 1]}}^{(T \setminus v_{k + 1})}\left(\bm{c}^{(k)}\right) \ge \deg^{(T \setminus v_{k + 1})}(v_t) + 1 = \deg^{(T)}(v_t)
        \end{align*}
        Now observe that
        \begin{align*}
            \bm{F}_{\mathbb{N}_{[1, t - 1]}}^{(T)}\left(\left(\bm{c}^{(k)}, c_{k + 1}\right)\right) = \left(\bm{F}_{\mathbb{N}_{[1, t - 1]}}^{(T \setminus v_{k + 1})}\left(\bm{c}^{(k)}\right), c_{k + 1}\right)
        \end{align*}
        since $v_t$ never fires in the firing sequence $\mathbb{N}_{[1, t - 1]}$. Thus,
        \begin{align*}
            \bm{e}_t^{(k)} \cdot \bm{F}_{\mathbb{N}_{[1, t - 1]}}^{(T)}\left(\left(\bm{c}^{(k)}, c_{k + 1}\right)\right) \ge \deg^{(T)}(v_t),
        \end{align*}
        which proves that the firing of $v_t$ in $\Psi$ is legal. In addition,
        \begin{align*}    
            \bm{F}_{\mathbb{N}_{[1, t]}}^{(T)}\left(\left(\bm{c}^{(k)}, c_{k + 1}\right)\right) = \left(\bm{F}_{\mathbb{N}_{[1, t]}}^{(T \setminus v_{k + 1})}\left(\bm{c}^{(k)} - \bm{e}_t^{(k)}\right), c_{k + 1} + 1\right)
        \end{align*}
        since the firing of $v_t$ on $T$ causes $v_t$ to lose an additional chip to $v_{k + 1}$ compared to the firing of $v_t$ on $T \setminus v_{k + 1}$. Due to the legality of $\Phi$, the firing sequence $\mathbb{N}_{t + 1}^{k}$ is legal on $T \setminus v_{k + 1}$ starting from $\bm{F}_{\mathbb{N}_{[1, t]}}^{(T \setminus v_{k + 1})}\left(\bm{c}^{(k)} - \bm{e}_t^{(k)}\right)$. Since $v_t$ never fires in the firing sequence $\mathbb{N}_{t + 1}^{k}$, $\mathbb{N}_{t + 1}^{k}$ must also be a legal firing sequence on $T$ starting from $\left(\bm{F}_{\mathbb{N}_{[1, t]}}^{(T \setminus v_{k + 1})}\left(\bm{c}^{(k)} - \bm{e}_t^{(k)}\right), c_{k + 1} + 1\right)$. Thus, $\mathbb{N}_{t + 1}^k$ is a legal firing sequence on $T$ starting from $\bm{F}_{\mathbb{N}_{[1, t]}}^{(T)}\left(\left(\bm{c}^{(k)}, c_{k + 1}\right)\right)$, which proves that every firing move in $\Psi$ is legal, with the possible exception of the last one, the firing of $v_{k + 1}$. But the firing of $v_{k + 1}$ in $\Psi$ is also legal because it yields the legal chip configuration $\left(\bm{c}^{(k)}, c_{k + 1}\right)$. Thus, we have shown that all firing moves in $\Psi$ are legal, which completes this case and proves the desired claim.

\end{proof}

% \begin{corollary}\label{Maintain SRC by Adding Chips}
%     Let $T$ be an $n$-vertex tree, and let $\bm{s}^{(n)}$ be a self-reachable configuration on $T$. If $\bm{x}^{(n)} \in \mathbb{Z}_{\ge0}^{(n)}$, then $\bm{s}^{(n)} + \bm{x}^{(n)}$ is self-reachable on $T$. In other words, adding chips to a self-reachable configuration yields another self-reachable configuration.
% \end{corollary}

% \begin{proof}
%     Let $T^*$ be an arbitrary $m$-vertex subtree of $T$. Then by Theorem \ref{srcsubtreeresult}, $\bm{s}^{(n)}$ has at least $m - 1$ chips on $T^*$. Since $\bm{x}^{(n)}$ has only nonnegative entries, $\bm{s}^{(n)} + \bm{x}^{(n)}$ must have at least $m - 1$ chips on $T^*$ as well. This proves the claim.
% \end{proof}

The result of Theorem \ref{srcsubtreeresult} yields the following corollaries for trees.

\begin{corollary}\label{subtreeconfigsaresrcs}
    Let $T$ be an $n$-vertex tree, and let $\bm{s}^{(n)}$ be a self-reachable configuration on $T$. Then the chip configuration formed by $\bm{s}^{(n)}$ on any subtree $T^*$ of $T$ must be self-reachable on $T^*$.
\end{corollary}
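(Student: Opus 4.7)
The plan is to derive this directly from Theorem \ref{srcsubtreeresult}, which provides a clean combinatorial characterization of self-reachability on trees in terms of chip counts on subtrees. The strategy is to verify that the restriction of $\bm{s}^{(n)}$ to $T^*$ satisfies the hypothesis of Theorem \ref{srcsubtreeresult} when viewed as a chip configuration on $T^*$, after which the conclusion follows immediately.

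Concretely, I would first introduce notation: let $T^*$ be an arbitrary $k$-vertex subtree of $T$, and let $\bm{s}^*$ denote the chip configuration on $T^*$ obtained by restricting $\bm{s}^{(n)}$ to the vertices of $T^*$. The goal is then to show that $\bm{s}^*$ is self-reachable on $T^*$, and by Theorem \ref{srcsubtreeresult} applied to the tree $T^*$ it suffices to check that for every $m$-vertex subtree $T^{**}$ of $T^*$, the configuration $\bm{s}^*$ places at least $m-1$ chips on $T^{**}$.

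The central observation, which is what makes the corollary essentially immediate, is that any subtree $T^{**}$ of $T^*$ is also a subtree of $T$, simply because the subgraph relation is transitive and $T^{**}$ inherits its tree structure from $T^*$ which inherits it from $T$. Since $\bm{s}^*$ agrees with $\bm{s}^{(n)}$ on every vertex of $T^*$, and in particular on every vertex of $T^{**}$, the number of chips that $\bm{s}^*$ places on $T^{**}$ equals the number that $\bm{s}^{(n)}$ places on $T^{**}$. By the hypothesis that $\bm{s}^{(n)}$ is self-reachable on $T$, combined with the forward direction of Theorem \ref{srcsubtreeresult}, this count is at least $m-1$. Hence the hypothesis of the reverse direction of Theorem \ref{srcsubtreeresult} is satisfied for $\bm{s}^*$ on $T^*$, and we conclude that $\bm{s}^*$ is self-reachable on $T^*$.

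There is no serious obstacle in this argument; the only thing to be careful about is to make sure that the notion of ``subtree'' used throughout the paper really is transitive in the sense needed (so that subtrees of $T^*$ are automatically subtrees of $T$). Provided this is the standing interpretation, the corollary is an immediate packaging of Theorem \ref{srcsubtreeresult}.
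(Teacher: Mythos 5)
Your proposal is correct and matches the paper's proof in essence: the paper also deduces the corollary directly from Theorem \ref{srcsubtreeresult}, just phrased in the contrapositive (if the restriction to $T^*$ were not self-reachable, some subtree of $T^*$ — hence of $T$ — would violate the chip-count condition, contradicting self-reachability on $T$). Your care about transitivity of the subtree relation is exactly the implicit step the paper relies on.
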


\begin{proof}
    If $\bm{s}^{(n)}$ does not form a self-reachable configuration on some subtree $T^*$ of $T$, then $\bm{s}^{(n)}$ does not form a self-reachable configuration on $T$ by Theorem \ref{srcsubtreeresult}. %This is the contrapositive of the desired claim, so the proof is complete.
\end{proof}

\begin{corollary}\label{AddChipToLeafOrNeighbor}
    Let $T$ be an $n$-vertex tree ($n \in \mathbb{N}_2$) where $v_n$ is a leaf connected to $v_{n - 1}$. Then the following are equivalent:

    \begin{enumerate}
        \item $\bm{s}^{(n - 1)}$ is a self-reachable configuration on $T \setminus v_{n}$.

        \item $\left(\bm{s}^{(n - 1)} + \bm{e}_{n - 1}^{(n - 1)}, 0\right)$ is a self-reachable configuration on $T$.

        \item $\left(\bm{s}^{(n - 1)}, 1\right)$ is a self-reachable configuration on $T$.
    \end{enumerate}
\end{corollary}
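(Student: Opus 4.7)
The plan is to invoke Theorem \ref{srcsubtreeresult} to convert each of (1), (2), and (3) into a condition on chip counts over subtrees, and then verify that the three resulting subtree conditions coincide. The structural fact that drives the whole argument is that, because $v_n$ is a leaf adjacent only to $v_{n-1}$, every subtree of $T$ either lies entirely in $T \setminus v_n$ or contains both $v_n$ and $v_{n-1}$. Conversely, every subtree of $T \setminus v_n$ is already a subtree of $T$, and every subtree of $T \setminus v_n$ containing $v_{n-1}$ may be extended to a subtree of $T$ by adjoining $v_n$ across the unique edge leaving that leaf.

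First I would prove (1)$\Leftrightarrow$(3). Theorem \ref{srcsubtreeresult} makes (3) equivalent to the statement that $(\bm{s}^{(n-1)}, 1)$ has at least $m-1$ chips on every $m$-vertex subtree $T^{**}$ of $T$. Splitting by whether $v_n \in T^{**}$: when $v_n \notin T^{**}$, the chip count on $T^{**}$ equals that of $\bm{s}^{(n-1)}$ on $T^{**}$, which matches the condition of (1) for that subtree; when $v_n \in T^{**}$, the chip count equals that of $\bm{s}^{(n-1)}$ on the $(m-1)$-vertex subtree $T^{**} \setminus v_n$ of $T \setminus v_n$ plus one, so the bound $m - 1$ becomes the bound $(m-1) - 1$ for that smaller subtree, again matching (1). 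Because both subfamilies of subtrees sweep out all subtrees of $T \setminus v_n$ as $T^{**}$ varies, Theorem \ref{srcsubtreeresult} applied to $T \setminus v_n$ shows that (3) is equivalent to (1).

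Next I would prove (1)$\Leftrightarrow$(2) by the same strategy, now tracking the extra chip on $v_{n-1}$ rather than on $v_n$. Here a subtree $T^{**}$ of $T$ gains an extra chip from $(\bm{s}^{(n-1)} + \bm{e}_{n-1}^{(n-1)}, 0)$ precisely when $v_{n-1} \in T^{**}$, while $v_n$ contributes nothing since it holds zero chips. The case split on whether $v_n \in T^{**}$ and whether $v_{n-1} \in T^{**}$ produces three inequalities on $\bm{s}^{(n-1)}$ indexed by subtrees of $T \setminus v_n$, and the main bookkeeping observation is that the inequality arising from a subtree $T^{*} \subseteq T \setminus v_n$ with $v_{n-1} \in T^{*}$ and $v_n \notin T^{*}$ asks only that $\bm{s}^{(n-1)}$ have at least $|T^{*}| - 2$ chips on $T^{*}$, whereas adjoining $v_n$ to $T^{*}$ to form an $(|T^{*}|+1)$-vertex subtree of $T$ yields the stronger demand of $|T^{*}| - 1$ chips on $T^{*}$. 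This stronger inequality, combined with the unchanged inequalities for subtrees avoiding $v_{n-1}$, reassembles into the condition of (1).

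The main obstacle is precisely this dominance check in the (1)$\Leftrightarrow$(2) step: one must verify that the weaker residual constraint produced by the middle subcase imposes nothing beyond what the extended-subtree subcase already forces, so that (2) collapses cleanly onto the characterization of (1) provided by Theorem \ref{srcsubtreeresult} applied to $T \setminus v_n$. Once this is observed, the three characterizations coincide and the cycle of equivalences is complete.
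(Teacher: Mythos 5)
Your proof is correct, and it takes a genuinely different route from the paper's. The paper mixes dynamic and static arguments: it gets (3)$\Rightarrow$(2) by firing the leaf $v_n$ and invoking the closure result (Lemma \ref{closureofsrcs}), gets (3)$\Rightarrow$(1) from Corollary \ref{subtreeconfigsaresrcs}, and then proves (1)$\Rightarrow$(2) and (1)$\Rightarrow$(3) by contradiction using the subtree characterization of Theorem \ref{srcsubtreeresult}. You instead treat all three statements symmetrically, translating each into its family of subtree inequalities via Theorem \ref{srcsubtreeresult} and checking that the families coincide; your dominance observation in the (1)$\Leftrightarrow$(2) step---that the constraint coming from a subtree containing $v_{n-1}$ but not $v_n$ is subsumed by the constraint from the same subtree with $v_n$ adjoined---is exactly the right bookkeeping and is sound. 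What your approach buys: it yields honest two-way equivalences, so the cycle closes automatically; notably, the paper's proof as written establishes (3)$\Rightarrow$(2), (3)$\Rightarrow$(1), (1)$\Rightarrow$(2), and (1)$\Rightarrow$(3), but never an implication \emph{out of} (2), so your symmetric treatment is, if anything, more complete (the missing direction (2)$\Rightarrow$(1) is precisely your case analysis read in one direction). What the paper's approach buys: the (3)$\Rightarrow$(2) step is a one-line appeal to the firing dynamics, and the proof exercises the closure and restriction machinery rather than only the characterization. One small patch you should make: your structural dichotomy (``every subtree of $T$ lies in $T \setminus v_n$ or contains both $v_n$ and $v_{n-1}$'') omits the singleton subtree on $v_n$ alone; it imposes only the vacuous bound of at least $0$ chips in each of (2) and (3), so nothing breaks, but it should be mentioned when you claim your two subfamilies exhaust all subtrees of $T$.
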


\begin{proof}
    Note that (3) implies (2) by 
    %Corollary \ref{onlysrcsreachablefromscrs} 
    Lemma \ref{closureofsrcs}
    because the configuration $\left(\bm{s}^{(n - 1)} + \bm{e}_{n - 1}^{(n - 1)}, 0\right)$ can be reached via legal firing moves from $\left(\bm{s}^{(n - 1)}, 1\right)$. Also note that (3) implies (1) by Corollary \ref{subtreeconfigsaresrcs}. We will next show that (1) implies (2). Suppose towards contradiction that $\left(\bm{s}^{(n - 1)} + \bm{e}_{n - 1}^{(n - 1)}, 0\right)$ is not self-reachable on $T$. Then there exists an $m$-vertex subtree $T^*$ of $T$ on which $\left(\bm{s}^{(n - 1)} + \bm{e}_{n - 1}^{(n - 1)}, 0\right)$ has fewer than $m - 1$ chips. If $T^*$ does not contain $v_n$, then $\bm{s}^{(n - 1)} + \bm{e}_{n - 1}^{(n - 1)}$, and by extension $\bm{s}^{(n - 1)}$, contains fewer than $m - 1$ chips on an $m$-vertex subtree of $T \setminus v_n$, which contradicts Theorem \ref{srcsubtreeresult}. Therefore, $T^*$ must contain $v_n$. Note that since $\left(\bm{s}^{(n - 1)} + \bm{e}_{n - 1}^{(n - 1)}, 0\right)$ contains fewer than $m - 1$ chips on $T^*$, $\bm{s}^{(n - 1)}$ contains fewer than $m - 2$ chips on $T^* \setminus v_n$, an $(m - 1)$-vertex subtree of $T \setminus v_n$. This contradicts Theorem \ref{srcsubtreeresult}. A very similar subtree argument shows that (1) implies (3), which completes the proof.
\end{proof}

We now turn our attention to proving our second main result of the section, Theorem \ref{srcmutualreachability}.
Before we provide this proof, we must consider the following lemma for self-reachable configurations on graphs.

\begin{lemma}\label{everyvertexcaneventuallyfire}
    Let $G$ be a connected simple $n$-vertex graph, and let $\bm{s}^{(n)}$ be a self-reachable chip configuration on $T$. Then for each $i \in \mathbb{N}_{[1, n]}$, there exists a chip configuration $\bm{c}_i^{(n)} \in R^{(G)}\left(\bm{s}^{(n)}\right)$ such that it is legal to fire $v_i$ on $G$ starting from $\bm{c}_i^{(n)}$.
\end{lemma}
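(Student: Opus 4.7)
The plan is to leverage Proposition \ref{srcfireeachvertexonce} to produce a single legal firing sequence on $G$ starting from $\bm{s}^{(n)}$ that fires each vertex of $G$ exactly once, and then extract the desired configurations $\bm{c}_i^{(n)}$ as intermediate stages of that sequence. The key observation is that along any such sequence, each vertex $v_i$ must become legally firable at some intermediate step, which is exactly what the lemma demands.

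First I would observe that there exists at least one vertex $v_j$ that can legally fire on $G$ starting from $\bm{s}^{(n)}$. This is immediate from self-reachability: since $\bm{s}^{(n)} \in R^{(G)}\left(\bm{s}^{(n)}\right)$, there is a nonempty legal firing sequence from $\bm{s}^{(n)}$, whose first entry supplies such a $v_j$. Applying Proposition \ref{srcfireeachvertexonce} with this $v_j$, I obtain a legal firing sequence $\Phi = \{\phi_1, \phi_2, \ldots, \phi_n\}$ on $G$ starting from $\bm{s}^{(n)}$ with $\phi_1 = j$ in which each natural number between $1$ and $n$ appears exactly once.

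Next, for each $i \in \mathbb{N}_{[1,n]}$, I would locate the unique index $k$ with $\phi_k = i$. When $k \ge 2$, the natural candidate is
\begin{equation*}
\bm{c}_i^{(n)} = \bm{F}_{\{\phi_1, \phi_2, \ldots, \phi_{k-1}\}}^{(G)}\left(\bm{s}^{(n)}\right),
\end{equation*}
which is reached from $\bm{s}^{(n)}$ via a nonempty prefix of $\Phi$ and from which the firing of $v_i$ is legal by the legality of $\Phi$. When $k = 1$, the vertex $v_i = v_j$ already fires legally from $\bm{s}^{(n)}$ itself, so I would set $\bm{c}_i^{(n)} = \bm{s}^{(n)}$; self-reachability gives $\bm{s}^{(n)} \in R^{(G)}\left(\bm{s}^{(n)}\right)$ directly. (Equivalently, one may take $\bm{c}_i^{(n)} = \bm{F}_{\Phi}^{(G)}\left(\bm{s}^{(n)}\right)$, which equals $\bm{s}^{(n)}$ by Lemma \ref{firingsequenceeachvertexonce} and is exhibited via the nonempty sequence $\Phi$.)

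The hard part, if one wishes to call it that, is simply ensuring the firing sequence used to reach each $\bm{c}_i^{(n)}$ is nonempty, as required by the definition of $R^{(G)}$; this is automatic when $k \ge 2$ and handled by self-reachability when $k = 1$. No substantial new machinery is needed beyond Proposition \ref{srcfireeachvertexonce}, since the lemma is in essence a repackaging of that proposition together with the self-membership property of self-reachable configurations.
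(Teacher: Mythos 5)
Your proposal is correct and follows essentially the same route as the paper: both invoke Proposition \ref{srcfireeachvertexonce} to obtain a legal firing sequence firing each vertex exactly once, and then take the configurations reached by prefixes of that sequence as the witnesses $\bm{c}_i^{(n)}$. If anything, you are slightly more careful than the paper, since you explicitly supply the initially firable vertex needed to apply Proposition \ref{srcfireeachvertexonce} and handle the nonemptiness requirement in the definition of $R^{(G)}$ for the first vertex of the sequence.
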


\begin{proof}
    We know by Proposition \ref{srcfireeachvertexonce} that there exists a legal $n$-vertex firing sequence $\Phi$ that fires each vertex of $G$ exactly once:
    \begin{equation}
        \bm{F}_{\Phi}^{(G)}\left(\bm{s}^{(n)}\right) = \bm{s}^{(n)}.\notag
    \end{equation}
    Without loss of generality, we let $\Phi = \{1, 2, \ldots, n\}$. Then $v_1$ can be legally fired starting from $\bm{s}^{(n)} \in R^{(G)}\left(\bm{s}^{(n)}\right)$, $v_2$ can be legally fired starting from $\bm{F}_{\{1\}}\left(\bm{s}^{(n)}\right) \in R^{(G)}\left(\bm{s}^{(n)}\right)$, and so on, and $v_n$ can be legally fired starting from $\bm{F}_{\mathbb{N}_{[1, n - 1]}}\left(\bm{s}^{(n)}\right) \in R^{(G)}\left(\bm{s}^{(n)}\right)$. This proves the claim.
\end{proof}

\begin{proof}[Proof of Theorem \ref{srcmutualreachability}]
   % Corollary \ref{onlysrcsreachablefromscrs} 
   Lemma \ref{closureofsrcs} 
   proves sufficiency, so we only need to prove necessity, which we do by induction on $n$. 
   The base case of $n=1$ vacuously holds.
   %On a tree with one vertex and $\ell$ chips for any $\ell \in \mathbb{N}_0$, the only self-reachable configuration has $\ell$ chips on that vertex, so our result vacuously holds. This completes our base case. 
   For induction hypothesis, assume that if $T$ is a tree with $k$ vertices, $\bm{s}^{(k)}$ is a self-reachable chip configuration on $T$, and $\bm{c}^{(k)}$ is a self-reachable chip configuration on $T$ with the same number of chips as $\bm{s}^{(k)}$, then $\bm{c}^{(k)} \in R^{(T)}\left(\bm{s}^{(k)}\right)$. We now let $T$ be a $(k + 1)$-vertex tree, and we let $v_{k + 1}$ be a vertex of degree 1 connected to $v_k$. Now let $\left(\bm{s}^{(k - 1)}, s_k, s_{k + 1}\right)$ and $\left(\bm{c}^{(k - 1)}, c_k, c_{k + 1}\right)$ be self-reachable configurations on $T$ with $\ell \in \mathbb{N}_k$ chips. Without loss of generality, suppose $s_{k + 1} \ge c_{k + 1}$. By Lemma \ref{closureofsrcs}, %\ref{mutualreachabilitylemma},
    it suffices to prove that one of $\left(\bm{c}^{(k - 1)}, c_k, c_{k + 1}\right)$ and $\left(\bm{s}^{(k - 1)}, s_k, s_{k + 1}\right)$ is reachable from the other. To do this, we consider three cases and construct a way to legally reach $\left(\bm{c}^{(k - 1)}, c_k, c_{k + 1}\right)$ from $\left(\bm{s}^{(k - 1)}, s_k, s_{k + 1}\right)$ in each case.
    
 \textbf{Case 1:}  $s_{k + 1} \ge c_{k + 1} > 0$:
Start with $\left(\bm{s}^{(k - 1)}, s_k, s_{k + 1}\right)$ and begin by firing $v_{k + 1}$ until the configuration $\left(\bm{s}^{(k - 1)}, s_k + s_{k + 1} - c_{k + 1}, c_{k + 1}\right)$ is reached. 
We know that this will eventually occur because $s_{k + 1} \ge c_{k + 1}$.
Next, we know by %Corollary \ref{onlysrcsreachablefromscrs} 
			Lemma \ref{closureofsrcs}            
            that  $\left(\bm{s}^{(k - 1)}, s_k + s_{k + 1} - c_{k + 1}, c_{k + 1}\right)$ must be self-reachable on $T$. Thus, Corollary \ref{subtreeconfigsaresrcs} implies that $\left(\bm{s}^{(k - 1)}, s_k + s_{k + 1} - c_{k + 1}\right)$ is self-reachable on $T \setminus v_{k + 1}$. Also note that both $\left(\bm{s}^{(k - 1)}, s_k + s_{k + 1} - c_{k + 1}\right)$ and $\left(\bm{c}^{(k - 1)}, c_k\right)$ are self-reachable configurations on $T$ with $\ell - c_{k + 1}$ chips. Therefore, by our induction hypothesis, it is possible to reach $\left(\bm{c}^{(k - 1)}, c_k\right)$ from $\left(\bm{s}^{(k - 1)}, s_k + s_{k + 1} - c_{k + 1}\right)$ via a sequence of legal firing moves on $T \setminus v_{k + 1}$. We perform the same sequence of firing moves on $T$, except we replace each firing of $v_k$ on $T \setminus v_{k + 1}$ with a firing of $v_{k + 1}$ followed immediately by a firing of $v_k$ on $T$. By Lemma \ref{treeinductionbasis}, this is legal and yields the configuration $\left(\bm{c}^{(k - 1)}, c_k, c_{k + 1}\right)$ on $T$, as desired.

 \textbf{Case 2:} $s_{k + 1} > c_{k + 1} = 0$:
  We know by Lemma \ref{everyvertexcaneventuallyfire} that there exists $\left(\bm{d}^{(k - 1)}, d_k, d_{k + 1}\right) \in R^{(T)}\left(\left(\bm{c}^{(k - 1)}, c_k, c_{k + 1}\right)\right)$ such that it is legal to fire the vertex $v_{k + 1}$ on $T$ starting from $\left(\bm{d}^{(k - 1)}, d_k, d_{k + 1}\right)$, meaning that $d_{k + 1} \ge 1$. We can then use the same reasoning as in the previous case to deduce that one of $\left(\bm{d}^{(k - 1)}, d_k, d_{k + 1}\right)$ and $\left(\bm{s}^{(k - 1)}, s_k, s_{k + 1}\right)$ will be reachable from the other. So by Lemma \ref{closureofsrcs},% \ref{mutualreachabilitylemma}, 
        $\left(\bm{s}^{(k - 1)}, s_k, s_{k + 1}\right) \in R^{(T)}\left(\left(\bm{d}^{(k - 1)}, d_k, d_{k + 1}\right)\right)$, which means that $\left(\bm{s}^{(k - 1)}, s_k, s_{k + 1}\right) \in R^{(T)}\left(\left(\bm{c}^{(k - 1)}, c_k, c_{k + 1}\right)\right)$, as desired.
        
 \textbf{Case 3:} $s_{k + 1} = c_{k + 1} = 0$:
 As in the previous case, we can deduce the existence of 
        \begin{align*}
            &\left(\bm{d}^{(k - 1)}, d_k, d_{k + 1}\right) \in R^{(T)}\left(\left(\bm{c}^{(k - 1)}, c_k, c_{k + 1}\right)\right), \\
            &\left(\bm{\sigma}^{(k - 1)}, \sigma_k, \sigma_{k + 1}\right) \in R^{(T)}\left(\left(\bm{s}^{(k - 1)}, s_k, s_{k + 1}\right)\right)
        \end{align*}
        such that $d_{k + 1} \ge 1$ and $\sigma_{k + 1} \ge 1$. By the same argument as in Case 1, one of $\left(\bm{d}^{(k - 1)}, d_k, d_{k + 1}\right)$ and $\left(\bm{\sigma}^{(k - 1)}, \sigma_k, \sigma_{k + 1}\right)$ is reachable from the other. Then by Lemma \ref{closureofsrcs}, % \ref{mutualreachabilitylemma}, 
        we see that one of $\left(\bm{c}^{(k - 1)}, c_k, c_{k + 1}\right)$ and $\left(\bm{s}^{(k - 1)}, s_k, s_{k + 1}\right)$ is reachable from the other.
    
%    This completes the proof of the theorem.                 
    
\end{proof}    

An immediate result of Theorem \ref{srcmutualreachability} is the following observation on the equivalence classes of self-reachable configurations on tress. 

\begin{corollary}
    Let $T$ be an $n$-vertex tree. The equivalence relation $\sim$ on $S_\ell^{(T)}$, as defined in Corollary \ref{srcequivalencerelation}, has only one equivalence class, $S_\ell^{(T)}$ itself.
\end{corollary}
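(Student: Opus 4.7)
The plan is to observe that this corollary is an essentially immediate consequence of Theorem \ref{srcmutualreachability} combined with Corollary \ref{srcequivalencerelation}. The relation $\sim$ is already established to be an equivalence relation on $S_\ell^{(T)}$, so to show there is only one equivalence class I only need to show that any two elements of $S_\ell^{(T)}$ are related to one another under $\sim$.

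Concretely, I would let $\bm{s}_1^{(n)}, \bm{s}_2^{(n)} \in S_\ell^{(T)}$ be arbitrary. By definition of $S_\ell^{(T)}$, both are self-reachable configurations on $T$ with exactly $\ell$ chips. Since $\bm{s}_1^{(n)}$ is self-reachable and $\bm{s}_2^{(n)}$ is a chip configuration on $T$ with the same number of chips as $\bm{s}_1^{(n)}$ which is itself self-reachable, Theorem \ref{srcmutualreachability} applies and gives $\bm{s}_2^{(n)} \in R^{(T)}\left(\bm{s}_1^{(n)}\right)$. By the definition of $\sim$ in Corollary \ref{srcequivalencerelation}, this means $\bm{s}_2^{(n)} \sim \bm{s}_1^{(n)}$. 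Since $\bm{s}_1^{(n)}$ and $\bm{s}_2^{(n)}$ were arbitrary, every pair of elements of $S_\ell^{(T)}$ is related, so $S_\ell^{(T)}$ forms a single equivalence class.

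There is no real obstacle here — all the substantive work has been done in Theorem \ref{srcmutualreachability} and in establishing that $\sim$ is an equivalence relation. The only small care one might take is to note that if $S_\ell^{(T)}$ is empty then the statement is vacuous, and if $S_\ell^{(T)}$ is nonempty then the argument above produces the unique equivalence class.
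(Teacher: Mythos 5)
Your proposal is correct and matches the paper's approach exactly: the paper presents this corollary as an immediate consequence of Theorem \ref{srcmutualreachability}, which is precisely the argument you give (any two configurations in $S_\ell^{(T)}$ are mutually reachable, hence related under $\sim$). Your added remark about the vacuous empty case is a harmless bit of extra care.
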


%\begin{proof}
%    As a consequence of Theorem \ref{srcmutualreachability}, any two self-reachable configurations on $T$ with $\ell$ chips must be reachable from each other, meaning they are related by $\sim$. Thus, every element of $S_\ell^{(T)}$ is related to every other element, which means that $S_\ell^{(T)}$ is the only equivalence class of $\sim$.
%\end{proof}

\section{An enumeration result}\label{sec:combinatorial}

We conclude this paper with an interesting enumerative result for the number of self-reachable configurations on trees. 

\begin{theorem}\label{srccountingtheorem}
    Let $T$ be an $n$-vertex tree and let $\displaystyle C_{\ell,n}\coloneq \left|S_\ell^{(T)}\right|$.    
    %The cardinality of $S_\ell^{(T)}$ depends only on $\ell$ and $n$ (meaning we can write it as $C_{\ell, n}$) and 
    Then $C_{\ell,n}$ satisfies  recurrence relation for $\ell \in \mathbb{N}_2$ and $n \in \mathbb{N}_2$:
    \begin{align}\label{eq4.1}
        C_{\ell, n} = C_{\ell - 1, n} + 2C_{\ell - 1, n - 1} - C_{\ell - 2, n - 1}.
    \end{align}
    with $C_{\ell, 1} = 1$ for all $\ell \in \mathbb{N}_0$, $C_{0, n} = 0$ for all $n \in \mathbb{N}_2$, $C_{1, 2} = 2$, and $C_{1, n} = 0$ for all $n \in \mathbb{N}_3$. 
\end{theorem}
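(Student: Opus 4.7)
The plan is to count $S_\ell^{(T)}$ by the value $s_n$ at a fixed leaf. Pick any leaf $v_n$ of $T$ with unique neighbor $v_{n-1}$, and for $k \in \mathbb{N}_{[0,\ell]}$ set $S_{\ell, k}^{(T)} = \{\bm{s} \in S_\ell^{(T)} : s_n = k\}$, so that $|S_\ell^{(T)}| = \sum_{k=0}^{\ell} |S_{\ell, k}^{(T)}|$.

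The heart of the argument is a pair of bijections. For each $k \ge 1$, truncation $(\bm{s}^{(n-1)}, k) \mapsto \bm{s}^{(n-1)}$ should send $S_{\ell, k}^{(T)}$ bijectively onto $S_{\ell - k}^{(T \setminus v_n)}$: Corollary \ref{subtreeconfigsaresrcs} handles the forward direction, and for the reverse, given $\bm{t}^{(n-1)} \in S_{\ell-k}^{(T \setminus v_n)}$, every $m$-subtree $T^*$ of $T$ either lies in $T \setminus v_n$ (satisfying the chip bound through $\bm{t}^{(n-1)}$), equals $\{v_n\}$ (trivial), or contains the edge $\{v_{n-1}, v_n\}$; in the last case $T^* \setminus v_n$ is an $(m-1)$-subtree of $T \setminus v_n$ and the total chip count of $(\bm{t}^{(n-1)}, k)$ on $T^*$ is at least $k + (m-2) \ge m - 1$, using $k \ge 1$. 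For $k = 0$, the edge $\{v_{n-1}, v_n\}$ forces $s_{n-1} \ge 1$, and Corollary \ref{AddChipToLeafOrNeighbor}(1) $\Leftrightarrow$ (2) directly provides a bijection $(\bm{s}^{(n-1)}, 0) \mapsto \bm{s}^{(n-1)} - \bm{e}_{n-1}^{(n-1)}$ from $S_{\ell, 0}^{(T)}$ onto $S_{\ell-1}^{(T \setminus v_n)}$.

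Inducting on $n$ to treat $|S_*^{(T \setminus v_n)}|$ as the tree-independent value $C_{*, n-1}$, summing yields
\begin{align*}
|S_\ell^{(T)}| = C_{\ell-1, n-1} + \sum_{k=1}^\ell C_{\ell - k, n-1} = C_{\ell-1, n-1} + \sum_{j=0}^{\ell-1} C_{j, n-1},
\end{align*}
which depends only on $(\ell, n)$ and so may be called $C_{\ell, n}$. Subtracting the analogous identity at $\ell - 1$ telescopes the sum:
\begin{align*}
C_{\ell, n} - C_{\ell-1, n} = (C_{\ell-1, n-1} - C_{\ell-2, n-1}) + C_{\ell-1, n-1} = 2 C_{\ell-1, n-1} - C_{\ell-2, n-1},
\end{align*}
which rearranges to \eqref{eq4.1}. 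The base cases all follow directly from Theorem \ref{srcsubtreeresult}: on a single vertex there are no nontrivial constraints, $\ell < n - 1$ violates the whole-tree constraint for $n \ge 2$, and $C_{1, 2} = 2$ by listing $\{(1, 0), (0, 1)\}$.

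I expect the main subtlety to be the bijection for $s_n \ge 1$: the hypothesis $k \ge 1$ is exactly what compensates for the loss of a vertex when passing to $T^* \setminus v_n$, and this must be handled by the three-case analysis above. A useful byproduct is that the argument simultaneously establishes that $|S_\ell^{(T)}|$ depends only on $(\ell, n)$ and not on the isomorphism class of $T$, so the notation $C_{\ell, n}$ is retroactively well-defined.
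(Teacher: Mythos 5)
Your proof is correct, and while it shares the paper's basic strategy---fix a leaf $v_n$, classify self-reachable configurations by the number of chips on it, and use Theorem \ref{srcsubtreeresult}, Corollary \ref{subtreeconfigsaresrcs}, and Corollary \ref{AddChipToLeafOrNeighbor} to build bijections with configurations on $T \setminus v_n$---the counting step is genuinely different. The paper uses only three classes ($s_n = 0$, $s_n = 1$, $s_n \ge 2$); the first two coincide with your $k = 0,1$ cases, but the class $s_n \ge 2$ is handled by a bijection \emph{within the same tree} $T$: removing one chip from $v_n$ maps that class onto the set of self-reachable configurations on $T$ with $\ell - 1$ chips and at least one chip on $v_n$, which the paper counts as $C_{\ell-1,n} - C_{\ell-2,n-1}$ (reusing the $k=0$ analysis at level $\ell-1$, with Proposition \ref{Add Chips to SRC} supplying the reverse direction), so that \eqref{eq4.1} appears directly upon summing. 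You instead refine to the exact leaf value $s_n = k$ for every $k \ge 1$, prove the uniform truncation bijection $S_{\ell,k}^{(T)} \cong S_{\ell-k}^{(T \setminus v_n)}$ (your three-case subtree check, with $k \ge 1$ compensating for the lost vertex, is exactly right), obtain the cumulative identity $C_{\ell,n} = C_{\ell-1,n-1} + \sum_{j=0}^{\ell-1} C_{j,n-1}$, and recover \eqref{eq4.1} by telescoping. Your route buys two things: a stronger intermediate closed-sum formula, and a cleaner treatment of well-definedness, since your induction is on $n$ alone---the identity expresses $\left|S_\ell^{(T)}\right|$ purely in terms of quantities attached to $(n-1)$-vertex trees---whereas the paper's recurrence contains the same-$n$ term $C_{\ell-1,n}$, so its well-definedness argument must assume $C_{\ell-1,n}$ is already known to be tree-independent, an implicit double induction on $(\ell,n)$ that the paper leaves somewhat informal. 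What the paper's route buys is immediacy: the recurrence emerges in its stated form with no algebraic rearrangement, the term $C_{\ell-1,n}$ arising combinatorially from the chip-removal bijection rather than from a telescoped sum.
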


\begin{remark}
It is worth noting that the sequence from Theorem \ref{srccountingtheorem} appears in the OEIS as sequence A049600 \cite{oeisA049600}. 
\end{remark}

In order to prove this result, we must first state and prove a utility proposition.

\begin{proposition}\label{Add Chips to SRC}
    Let $G$ be a connected simple $n$-vertex graph, and let $\bm{s}^{(n)}$ be a self-reachable configuration on $G$. Let $\bm{c}^{(n)}$ be a chip configuration on $G$ such that $\bm{c}^{(n)}$ has at least as many chips on each vertex of $G$ as $\bm{s}^{(n)}$. Then $\bm{c}^{(n)}$ is self-reachable on $G$.
\end{proposition}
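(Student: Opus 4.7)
The plan is to invoke two prior results in direct succession. First, since $\bm{s}^{(n)}$ is self-reachable, Proposition \ref{srcfireeachvertexonce} guarantees the existence of a legal firing sequence $\Phi$ on $G$ starting from $\bm{s}^{(n)}$ that contains each natural number between $1$ and $n$ exactly once. This is the workhorse of the argument: instead of trying to construct a firing sequence for $\bm{c}^{(n)}$ from scratch, I will reuse $\Phi$.

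Next, I will write $\bm{c}^{(n)} = \bm{s}^{(n)} + \bm{x}^{(n)}$, where by hypothesis $\bm{x}^{(n)} \in \mathbb{Z}_{\geq 0}^n$. In particular $(\bm{c}^{(n)} - \bm{s}^{(n)}) \cdot \bm{e}_i^{(n)} = x_i \geq 0$ for every $i \in \mathbb{N}_{[1, n]}$, so the hypothesis of Proposition \ref{LegalityProp} is satisfied (trivially for every index, in particular for every index appearing in $\Phi$). Applying that proposition, $\Phi$ is a legal firing sequence on $G$ starting from $\bm{c}^{(n)}$.

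Finally, since $\Phi$ is a legal firing sequence starting from $\bm{c}^{(n)}$ that fires each vertex exactly once, Lemma \ref{firingsequenceeachvertexonce} yields
\begin{equation*}
    \bm{F}_\Phi^{(G)}\left(\bm{c}^{(n)}\right) = \bm{c}^{(n)}.
\end{equation*}
Because $\Phi$ is nonempty (it has length $n \geq 1$), this exhibits $\bm{c}^{(n)} \in R^{(G)}(\bm{c}^{(n)})$, so $\bm{c}^{(n)}$ is self-reachable. There is no real obstacle here; the only subtle point is recognizing that the firing sequence $\Phi$ one obtains from the self-reachability of $\bm{s}^{(n)}$ transfers verbatim to any configuration that dominates $\bm{s}^{(n)}$ coordinate-wise, precisely because adding extra chips cannot make a previously legal firing illegal.
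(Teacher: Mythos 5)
Your proposal is correct and follows essentially the same argument as the paper: obtain a legal firing sequence $\Phi$ firing each vertex exactly once via Proposition \ref{srcfireeachvertexonce}, transfer its legality to $\bm{c}^{(n)}$ via Proposition \ref{LegalityProp}, and conclude with Lemma \ref{firingsequenceeachvertexonce} that $\bm{F}_\Phi^{(G)}\left(\bm{c}^{(n)}\right) = \bm{c}^{(n)}$. Your added remarks (writing $\bm{c}^{(n)} = \bm{s}^{(n)} + \bm{x}^{(n)}$ and noting $\Phi$ is nonempty) are fine elaborations of steps the paper leaves implicit.
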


\begin{proof}
    By Proposition \ref{srcfireeachvertexonce}, there exists a legal firing sequence $\Phi$ on $G$ starting from $\bm{s}^{(n)}$ such that $\bm{F}_{\Phi}^{(G)}\left(\bm{s}^{(n)}\right) = \bm{s}^{(n)}$. By Proposition \ref{LegalityProp}, since $\bm{c}^{(n)}$ has at least as many chips on each vertex of $G$ as $\bm{s}^{(n)}$, $\Phi$ will also be a legal firing sequence on $G$ starting from $\bm{c}^{(n)}$. Finally, by Lemma \ref{firingsequenceeachvertexonce}, $\bm{F}_{\Phi}^{(G)}\left(\bm{c}^{(n)}\right) = \bm{c}^{(n)}$. Thus, $\bm{c}^{(n)}$ is self-reachable on $G$, as claimed.
\end{proof}

%%Counting proof below

\begin{proof}[Proof of Theorem \ref{srccountingtheorem}]
    The initial conditions follow as a tree with a single vertex has exactly one self-reachable configuration with $\ell$ chips,  a two-vertex tree with one chip has two possible self-reachable configurations, and Theorem \ref{srcsubtreeresult} respectively. The initial conditions imply that $C_{\ell, n}$ is a well-defined quantity whenever $\ell \le 1$ or $n \le 1$, and as such  $C_{\ell, n}$ depends only on $\ell$ and $n$ and not on the structure of $T$. It now suffices to prove that  \eqref{eq4.1} holds for arbitrary but fixed $\ell \in \mathbb{N}_2$ and $n \in \mathbb{N}_2$, under the assumption that $C_{\ell - 1, n}$, $C_{\ell - 1, n - 1}$, and $C_{\ell - 2, n - 1}$ are well-defined quantities. 
    Without loss of generality, suppose that $v_n$ is a leaf connected to $v_{n - 1}$. 
    %We seek to count the number of self-reachable configurations $\left(\bm{s}^{(n - 1)}, s_n\right)$ on $T$ with $\ell$ chips. We will divide these self-reachable configurations into three disjoint classes and count the number of self-reachable configurations in each class.
    To count the number of self-reachable configurations $\left(\bm{s}^{(n - 1)}, s_n\right)$ on $T$ with $\ell$ chips, we will divide this set into three disjoint classes and enumerate each one.

	\textbf{Class 1:} $s_n = 0$: We will show that self-reachable configurations of this form are in  bijection with the self-reachable configurations on $T \setminus v_n$ with $\ell - 1$ chips. If $\left(\bm{s}^{(n - 1)}, s_n\right)$ is a class 1 self-reachable configuration on $T$ with $\ell$ chips, we have that $\bm{s}^{(n - 1)}$ is self-reachable on $T \setminus v_n$ by Corollary \ref{subtreeconfigsaresrcs}. Note that if $\bm{s}^{(n - 1)} - \bm{e}_{n - 1}^{(n - 1)}$ has less than $m - 1$ chips on an $m$-vertex subtree of $T \setminus v_n$, then this subtree must contain $v_{n - 1}$ or else $\bm{s}^{(n - 1)}$ would not be self-reachable on $T \setminus v_n$ by Theorem 4.7. But if we then add $v_n$ to this subtree and form an $(m + 1)$-vertex subtree of $T$, then $\left(\bm{s}^{(n - 1)}, s_n\right)$ has less than $m$ chips on an $(m + 1)$-vertex subtree of $T$, which contradicts the fact that $\left(\bm{s}^{(n - 1)}, s_n\right)$ is self-reachable on $T$. Thus, $\bm{s}^{(n - 1)} - \bm{e}_{n - 1}^{(n - 1)}$, which has $\ell - 1$ chips, is self-reachable on $T \setminus v_n$. This shows that for each distinct class 1 self-reachable configuration on $T$ with $\ell$ chips, there exists a distinct self-reachable configuration on $T \setminus v_n$ with $\ell - 1$ chips. On the other hand, if $\bm{\sigma}^{(n - 1)}$ is a self-reachable configuration on $T \setminus v_n$ with $\ell - 1$ chips, then $\left(\bm{\sigma}^{(n - 1)} + \bm{e}_{n - 1}^{(n - 1)}, 0\right)$ is a self-reachable configuration on $T$ with $\ell$ chips by Corollary \ref{AddChipToLeafOrNeighbor}. This shows that for each distinct self-reachable configuration on $T \setminus v_n$ with $\ell - 1$ chips, there exists a distinct class 1 self-reachable configuration on $T$ with $\ell$ chips.
	Thus, we have the desired bijection. 
	%Thus, the class 1 self-reachable configurations on $T$ with $\ell$ chips are in a one-to-one correspondence with the self-reachable configurations on $T \setminus v_n$ with $\ell - 1$ chips.

        \textbf{Class 2:} $s_n = 1$: 
		We will show that self-reachable configurations of this form are also in bijection  with the self-reachable configurations on $T \setminus v_n$ with $\ell - 1$ chips. Note that if $\left(\bm{s}^{(n - 1)}, s_n\right)$ is a class 2 self-reachable configuration on $T$ with $\ell$ chips, then $\bm{s}^{(n - 1)}$ is self-reachable on $T \setminus v_n$ by Corollary \ref{subtreeconfigsaresrcs}, and $\bm{s}^{(n - 1)}$ has $\ell - 1$ chips on $T \setminus v_n$. This shows that for each distinct class 2 self-reachable configuration on $T$ with $\ell$ chips, there exists a distinct self-reachable configuration on $T \setminus v_n$ with $\ell - 1$ chips. On the other-hand, if $\bm{\sigma}^{(n - 1)}$ is a self-reachable configuration on $T \setminus v_n$ with $\ell - 1$ chips, then $\left(\bm{\sigma}^{(n - 1)}, 1\right)$ is a self-reachable configuration on $T$ with $\ell$ chips by Corollary \ref{AddChipToLeafOrNeighbor}. This shows that for each distinct self-reachable configuration on $T \setminus v_n$ with $\ell - 1$ chips, there exists a distinct class 2 self-reachable configuration on $T$ with $\ell$ chips. Thus, we have shown the desired bijection.

        \textbf{Class 3:} $s_n \ge 2$. We will show  that self-reachable configurations of this form are in bijection with the self-reachable configurations on $T$ with $\ell - 1$ chips that have at least one chip on $v_n$. We claim that if $\left(\bm{s}^{(n - 1)}, s_n\right)$ is a class 3 self-reachable configuration on $T$ with $\ell$ chips, then $\left(\bm{s}^{(n - 1)}, s_n - 1\right)$ is a self-reachable configuration on $T$ with $\ell - 1$ chips. Suppose towards contradiction that $\left(\bm{s}^{(n - 1)}, s_n - 1\right)$ is not self-reachable on $T$. Then by Theorem \ref{srcsubtreeresult}, there exists an $m$-vertex subtree $T^*$ of $T$ on which $\left(\bm{s}^{(n - 1)}, s_n - 1\right)$ has fewer than $m - 1$ chips. Without loss of generality, we can suppose that $T^*$ is chosen so that $m$ is minimal. If $T^*$ does not contain $v_n$, then $\left(\bm{s}^{(n - 1)}, s_n\right)$ would also have fewer than $m - 1$ chips on $T^*$, contradicting Theorem \ref{srcsubtreeresult}. So $T^*$ must contain $v_n$. Note that since $s_n - 1 \ge 1$, $\left(\bm{s}^{(n - 1)}, s_n - 1\right)$ must have fewer than $m - 2$ chips on the $(m - 1)$-vertex subtree $T^* \setminus v_n$. But this contradicts that fact that $T^*$ was chosen to minimize $m$. Thus, $\left(\bm{s}^{(n - 1)}, s_n - 1\right)$ is self-reachable on $T$. This shows that for each distinct class 3 self-reachable configuration on $T$ with $\ell$ chips, there exists a distinct self-reachable configuration on $T$ with $\ell - 1$ chips and at least one chip on $v_n$. On the other hand, if $\left(\bm{\sigma}^{(n - 1)}, \sigma_n\right)$ is a self-reachable configuration on $T$ with $\ell - 1$ chips and $\sigma_n \ge 1$, then $\left(\bm{\sigma}^{(n - 1)}, \sigma_n + 1\right)$ is a class 3 self-reachable configuration on $T$ with $\ell$ chips by Proposition \ref{Add Chips to SRC}. 
        Hence, we have shown the desired bijection.
        %Thus, the class 3 self-reachable configurations on $T$ with $\ell$ chips are in a one-to-one correspondence with the self-reachable configurations on $T$ with $\ell - 1$ chips that have at least one chip on $v_n$.
   
   Using these bijections, we can enumerate $C_{\ell,n}$.
   Since $C_{\ell - 1, n - 1}$ is assumed to be a well-defined quantity, the number of self-reachable configurations on $T \setminus v_n$ with $\ell - 1$ chips is $C_{\ell - 1, n - 1}$.
   Therefore, the cardinality of both class 1 and class 2 is $C_{\ell-1,n-1}$.
    %We will use these one-to-one correspondences to count the number of self-reachable configurations on $T$ with $\ell$ chips. Because $C_{\ell - 1, n - 1}$ is assumed to be a well-defined quantity, the number of self-reachable configurations on $T \setminus v_n$ with $\ell - 1$ chips is $C_{\ell - 1, n - 1}$. Therefore, we have $C_{\ell - 1, n - 1}$ class 1 self-reachable configurations on $T$ with $\ell$ chips and an additional $C_{\ell - 1, n - 1}$ class 2 self-reachable configurations on $T$ with $\ell$ chips. 
    To determine the number class 3 self-reachable configurations, we must count the number of self-reachable configurations on $T$ with $\ell - 1$ chips that have at least one chip on $v_n$. 
    We can instead count how many self-reachable configurations on $T$ with $\ell - 1$ chips have no chips on $v_n$ and subtract this from the total number of self-reachable configuration with $\ell-1$ chips on $T$.
    Note that, since we are enumerating class 1 configurations with $\ell-1$ chips, reapplying the argument gives a bijection with self-reachable configurations $T \setminus v_n$ with $\ell - 2$ chips, which are enumerated by $C_{\ell - 2, n - 1}$ which is assumed to be well-defined.
    Therefore there are $C_{\ell - 1, n} - C_{\ell - 2, n - 1}$ class 3 self-reachable configurations.
    Summing across the three classes, we obtain the desired result 
    \begin{align*}
        C_{\ell, n} = C_{\ell - 1, n} + 2C_{\ell - 1, n - 1} - C_{\ell - 2, n - 1}.
    \end{align*}

\end{proof}

\end{document}